\numberwithin{equation}{section}
\newcommand{\uu}{\underline}
\newcommand{\ki}{\Omega^ {\mathcal K}}
\newcommand{\z}{\mathcal B(\mathcal H)}
\newcommand{\m}{\mathcal}
\theoremstyle{plain}
\newtheorem{theorem}{Theorem}[section]
\newtheorem{proposition}[theorem]{Proposition}
\newtheorem{definition}[theorem]{Definition}
\begin{document}

\title{\small \textbf{OUTGOING CUNTZ SCATTERING SYSTEM FOR A COISOMETRIC LIFTING AND TRANSFER FUNCTION}}
\author {\sc Kalpesh J. Haria}
\date{}
\maketitle

\begin{center}
 \noindent Department of Mathematics,\\
Indian Institute of Technology Bombay,\\
Powai, Mumbai- 400076, India\\
E-mail: kalpesh@math.iitb.ac.in
\end{center}

\vspace{1cm}

\begin{abstract}
We study a coisometry that intertwines  Popescu's presentations of
minimal isometric dilations of a given operator tuple and of a coisometric lifting of the tuple. 
Using this we develop an
outgoing Cuntz scattering system which gives rise to an input-output formalism. A transfer
function is introduced for the system.
 We also compare the transfer function
and the characteristic function for the associated lifting.
\end{abstract}

\vspace{1cm}

\noindent 
{\sc Key words}: multivariate operator theory; row contraction; contractive lifting; 
outgoing Cuntz scattering system; transfer function; multi-analytic
operator; input-output formalism; linear system; characteristic
function.

\noindent
{\sc Mathematics Subject Classification:
47A20, 47A13, 47A48, 47A40, 47L30, 13F25, 93C05}

\section{\textbf Introduction}

The model of repeated interaction between quantum systems has been recently studied
in \cite{Go10} and an outgoing Cuntz scattering
system was associated to the model. In \cite{DH11} the authors gave a vast generalization of the
Gohm's repeated interaction model using the theory of liftings
of row contractions.  We recall that in the generalized
repeated interaction model of \cite{DH11} we have the
following operator theoretic data:

Let  $\m H$ and $ \m K$ be  complex
separable Hilbert spaces such that  $ \tilde{\m H}$ be a
 subspace of $\m H$ and $\ki$ be a distinguished unit
vector of $\m K$. Let $\m P$ be
a  $d$-dimensional Hilbert space with an orthonormal
 basis $\{\epsilon_j\}_{j=1}^d$, and
 $ U : \m H \otimes \m K \to \m H \otimes \m P$ and
$\tilde U :\tilde {\m H} \otimes \m K \to \tilde {\m H} \otimes \m P$
 be two unitaries such that
\begin{align}\label{1}
  U(\tilde h \otimes \ki) =  \tilde U(\tilde h \otimes \ki) ~\mbox{for each}~~ \tilde h \in \tilde {\m H}.
\end{align}
The unitary $ U : \m H \otimes \m K \to \m H \otimes \m P$ can be decomposed as
\[
  U(h \otimes \ki) = \displaystyle \sum_{j=1}^d E_j^* h \otimes \epsilon_j~  \mbox{for each}~~ h\in \m H,
\]
where $E_j$'s are some operators in $\z$, for j = 1, . . . , d. Likewise there exist some operators
 ̃
$C_j$'s in $\m B( \tilde {\m H})$  such that
\[
 \tilde  U(\tilde h \otimes \ki) = \displaystyle \sum_{j=1}^d C_j^*\tilde h \otimes \epsilon_j~  \mbox{for each}~~ \tilde
 h\in \tilde{\m H}.
\]
Observe that
$\ \sum_{j=1}^d E_jE_j^*= I$
 and
$\ \sum_{j=1}^d C_jC_j^*= I$, i.e., $\uu E$ and $\uu C$ are coisometric tuples.
Then equation (\ref{1}) yields that  $E_j^*\tilde h =
 C_j^*\tilde h$ for each $\tilde h \in \tilde {\m H}$, i.e.,   the tuple $\uu E$
  is a \textit{lifting} (cf. \cite{DG11}) of the tuple $\uu C$.
In this article  we proceed in the opposite way, viz. we start with coisometric
 $d$-tuples $\uu E = (E_1,\ldots, E_d)$ and $\uu C = (C_1,\ldots, C_d)$
such that the tuple $\uu E$ is a lifting of the tuple
$\uu C$, then associate a pair of unitaries to it but satisfying similar relations
as  above and then study the model.

The article is organized as follows: After associating a pair of unitaries to a given coisometric lifting
as stated above, we identify a coisometry which intertwines corresponding  minimal isometric dilations in section 2.
We investigate the properties of this coisometry and express 
it as a limit of certain compositions of these unitaries.
 The Cuntz scattering system was introduced in
\cite{BV05} using the generalization of
Lax-Phillips scattering system to a multivariate operator setting.
In section 3
we study the forward part of a Cuntz scattering system which is called as an outgoing Cuntz
scattering system in \cite{BV05}. Let  $\tilde {\Lambda}$ denote the
free semi group with generators $1,\ldots, d$.
Using an input-output formalism we define a
colligation of operators \cite{BV05} which gives rise to a $\tilde \Lambda$-linear system $\sum_{U, \tilde U}$.
 An application of the generalized Fourier transform
to the $\tilde \Lambda$-linear system $\sum_{U, \tilde U}$ under zero initial condition
leads to the input-output relation
\[
 \hat y(z) = \Theta_{U, \tilde U} \hat u(z)
\]
between the Fourier transforms of input and output variables
where $\Theta_{U, \tilde U}$ is the transfer function of the system.
These transfer functions are multi-analytic operators.
There are other approaches to transfer functions in
\cite{YK03} and \cite{GGY08}.

Popescu
introduced
the characteristic function in \cite{Po89b}
of a row contraction, and systematically developed an extensive theory of row contractions (cf.
\cite{Po95}, \cite{Po99}). We use some of the concepts from Popescu's theory in this work.
In \cite{DG11} Dey and Gohm described a class of multi-analytic operators which classify
certain class of liftings and called them characteristic functions for liftings.
 We find a relation between the transfer function
and the characteristic function for the associated lifting.

The following \textit{multi-index notation} will be used frequently in this article.
 Suppose $T_1, \ldots ,T_d \in \m B(\m L)$ for a Hilbert space $\m L.$
If $\alpha \in \tilde \Lambda $ is a word $\alpha_1\ldots\alpha_n$
 with length  $|\alpha |= n$ where each $\alpha_j \in \{1,\ldots,d\},$
then $T_\alpha$  denotes  $T_{\alpha_1}\ldots T_{\alpha_n}.$
For the empty word $\emptyset$
we define $|\emptyset| = 0$ and $T_\emptyset = I$.
The full Fock space over $\mathbb C^d $ denoted by
$\Gamma$ is the Hilbert space
\[
  \Gamma  := \mathbb{C}\oplus\mathbb{C}^d\oplus(\mathbb{C}^d)^{\otimes^2}
\oplus \ldots\oplus(\mathbb{C}^d)^{\otimes^m}\oplus \ldots .
\]
The element $1\oplus 0 \oplus \ldots$ of  $\Gamma$ is
called the vacuum vector. Let $\{e_1,\ldots, e_d\}$ be the
standard orthonormal basis of $\mathbb C^d$. For $\alpha = \alpha_1\ldots\alpha_m\in \tilde \Lambda$, $e_\alpha $
will denote the vector $e_{\alpha_1}\otimes\ldots \otimes e_{\alpha_m}$
in the full Fock space $\Gamma $ and $e_\emptyset$ will denote the vacuum vector.
$\{e_\alpha: \alpha \in \tilde \Lambda\}$ forms an orthonormal basis of the
full Fock space.

\section{Unitaries Associated to a Coisometric Lifting}
In this section we construct a coisometry associated to a coisometric lifting which plays
an important role in this article. For simplicity we assume that $d$ is
 finite throughout this article but all the results here can be derived also for $d = \infty$.
A tuple $\uu T = (T_1,\ldots, T_d)$
of bounded linear operators on a
Hilbert space $\m L$ is said to be a \textit{row contraction} if $\sum_{j=1}^{d}T_jT_j^* \leq I$. In particular
if $\sum_{j=1}^{d}T_jT_j^* = I$, then the tuple  $\uu T = (T_1,\ldots, T_d)$  is called a \textit{coisometric}. If
$T_j$'s are isometries with orthogonal ranges, then the tuple $\uu T = (T_1,\ldots, T_d)$ 
is called a \textit{row isometry}.
Consider a row contraction $\uu T$ as a row operator from $\bigoplus _{1}^{d} \m L $
to $\m L$. Define $D_T := (I - \uu T^*\uu T)^\frac{1}{2} : \bigoplus _{1}^{d}
\m L  \to \bigoplus _{1}^{d} \m L $
and let $\m D_T := \overline {\mbox{Range} ~ D_T}$.
The (left) creation operators $L_j$'s on $\Gamma $ are defined by
$L_j x = e_j \otimes  x$ for $1 \leq j \leq  d$ and $x \in \Gamma  $. The tuple $\uu L = (L_1, \ldots , L_d ) $ is a row
isometry. Popescu in \cite{Po89a} gave the following explicit presentation
 of the minimal isometric dilation of $\uu T$ by $\uu V$ on $\hat {\m L}= \m L \oplus (\Gamma \otimes \m D_T):$
\begin{equation}\label{2}
 V_j(\ell \oplus \displaystyle \sum_{\alpha \in \tilde{\Lambda}} e_\alpha \otimes d_\alpha)
= T_j\ell \oplus [e_\emptyset \otimes (D_T)_j\ell + e_j
\otimes  \displaystyle \sum_{\alpha \in \tilde{\Lambda}} e_\alpha \otimes d_\alpha]
 \end{equation}
where $\ell \in \mathcal L$, $d_\alpha \in \mathcal D_T$, and $(D_T)_j: \m L \to \bigoplus_{1}^{d} \m L$
is defined for $j= 1,\ldots,d$ by the $(D_T)_j \ell =
D_T(0,\ldots, \ell, \ldots, 0)$ with $\ell$ is
embedded at the $j^{th}$ component. If in addition  $\uu T$ is
 the coisometric tuple, then $\uu V$ is also the coisometric tuple.

Let $\uu C  = (C_1,\ldots, C_d)$ be a coisometric tuple on
a Hilbert space $\m H_C$,  $\uu E = (E_1,\ldots, E_d)$ be
a coisometric lifting of $\uu C = (C_1,\ldots, C_d)$ on a Hilbert space $\m H_E \supset \m H_C$ and
\begin{equation}\label{a}
 E_j =
\begin{pmatrix}
          C_j & 0 \\
        B_j & A_j\\
         \end{pmatrix}
\end{equation}
for $j=1,\ldots,d$ with respect to the decomposition $\m H_E = \m
H_C \oplus \m H_C^\perp$. From now on we denote $ \m H_C^\perp $ by $ \m H_A$.
Let $ \underline {\widehat  V}^E = (\widehat V_1^E,\ldots
,\widehat V_d^E)$ and $  \uu{ \widehat  V}^C = (\widehat V_1^C,\ldots
,\widehat V_d^C)$ be minimal isometric dilations of the form given by equation (\ref{2})
for tuples $\uu E$ and $\uu C$
on Hilbert spaces $\widehat {\m H}_E  = \m H_E \oplus (\Gamma \otimes \m D_E)$
  and $\widehat {\m H}_C = \m H_C \oplus (\Gamma \otimes \m D_C)$ respectively.
Let $\m P$ be a $d$-dimensional Hilbert space with an
orthonormal basis $\{\epsilon_1, \ldots, \epsilon_d\}$.
We  define operators  $\tilde U : \m H_C \oplus
\m D_C \to \m H_C \otimes \m P$ and  $ U : \m H_E \oplus
\m D_E \to \m H_E \otimes \m P$ as follows:
\[
  \tilde U(\tilde h \oplus y) :=  \sum_{j = 1}^d (C_j^*\tilde h + (D_C)_j^* y) \otimes \epsilon_j
= \sum_{j = 1}^d (\widehat V_j^C)^* (\tilde h \oplus e_\emptyset \otimes y)
  \otimes \epsilon_j,
\]
\[
U( h \oplus \eta) := \sum_{j = 1}^d (E_j^* h + (D_E)_j^* \eta) \otimes \epsilon_j
= \sum_{j = 1}^d (\widehat V_j^E)^* ( h \oplus e_\emptyset \otimes \eta)  \otimes \epsilon_j
\]
where $ \tilde h \in \m H_C,~ y \in  \m D_C,~  h \in \m H_E,  ~\mbox{and}~  \eta \in \m D_E $.

We show that $\tilde U$ and $U$ are unitaries.
 From the fact that $ \underline {\widehat  V}^C = (\widehat V_1^C,\ldots
,\widehat V_d^C)$ is a coisometric tuple, it follows that $\tilde U$ is an isometry. We claim that
 $\tilde U$
is a surjective map. For $ \sum_{j = 1}^d \tilde h_j \otimes \epsilon_j \in \m H_C \otimes \m P$
\begin{eqnarray*}
 \tilde U(\sum_{j = 1}^d(C_j\tilde h_j \oplus (D_C)_j \tilde h_j) &=&
  \sum_{j = 1}^d  \tilde U (C_j\tilde h_j \oplus (D_C)_j \tilde h_j)\\
&=& \sum_{j = 1}^d\big(\sum_{i = 1}^d(C_i^*C_j\tilde h_j + (D_C)_i^*(D_C)_j \tilde h_j) \otimes \epsilon_i\big)\\
&=& \sum_{j = 1}^d \tilde h_j \otimes \epsilon_j
\end{eqnarray*}
where the last equality uses
\begin{eqnarray*}(D_C)_i^*(D_C)_j = \left\{\begin{array}{ll}
-C_i^*C_j ~~~~~~~~~~~~~~\mbox{if}~~  i \neq j;\\
I-C_j^*C_j ~~~~~~~~~~~ \mbox{if}~~i = j. \end{array}\right.
\end{eqnarray*}
Thus $\tilde U$ is unitary. Similarly it can be shown that $U$ is also unitary.

For the main theorem of this section, we need to introduce some operators.
Let us define $\tilde U_1 : \widehat {\m H}_C  \to \m H_C  \otimes \m P \oplus \big(
\displaystyle\bigoplus_{m \geq 1} ((\mathbb C^d)^{\otimes^ m} \otimes \m D_C)\big)$
 by the formula
\begin{align}\label{3}
\tilde U_1 (\tilde h \oplus  \displaystyle \sum_{\alpha \in \tilde{\Lambda}} e_\alpha \otimes y_\alpha) =
\sum_{j = 1}^d \big((C_j^*\tilde h + (D_C)_j^* y_\emptyset)\otimes \epsilon_j\big) \oplus \displaystyle
 \sum_{ |\alpha|\geq 1}e_\alpha \otimes y_\alpha
\end{align}
where $\tilde h \in \m H_C $ and $y_\alpha \in \m D_C$.
For each $n \geq 2$, let
$$\tilde U_n: \m H_C \otimes \m P_{[1, n-1]}\oplus   \big(
\displaystyle\bigoplus_{m \geq n-1} ((\mathbb C^d)^{\otimes^ m} \otimes \m D_C)\big) \to
\m H_C \otimes \m P_{[1, n]}\oplus   \big( \displaystyle\bigoplus_{m \geq n} ((\mathbb C^d)^{\otimes^ m}
 \otimes \m D_C)\big)$$
be defined for  $\tilde h_{j_1,\ldots,j_{n-1}} \in \m H_C$  and $y_{j_1 \ldots j_{n-1}},~y_\alpha \in \m D_C$ by

\begin{multline}\label{4}
\tilde U_n \bigg(\displaystyle \sum_{j_1,\ldots,j_{n-1} = 1}^d(\tilde h_{j_1\ldots j_{n-1}} \otimes
\epsilon_{j_1}\otimes \ldots \otimes \epsilon_{j_{n-1}}) \\
\oplus  \displaystyle \sum_{j_1,
\ldots,j_{n-1} = 1}^d (e_{j_1} \otimes \ldots \otimes e_{j_{n-1}}\otimes y_{j_1 \ldots j_{n-1}})
\oplus \displaystyle \sum_{ |\alpha|\geq n} e_\alpha \otimes y_\alpha\bigg) \\
=\displaystyle \sum_{j_1,\ldots ,j_n = 1}^d \big((C_{j_n}^*\tilde h_{j_1\ldots j_{n-1}}
+ (D_C)_{j_n}^*y_{j_1 \ldots j_{n-1}}) \otimes  \epsilon_{j_1}\otimes \ldots \otimes
 \epsilon_{j_n} \big)\oplus \displaystyle \sum_{ |\alpha|\geq n} e_\alpha \otimes y_\alpha.
\end{multline}
It can be seen that $\tilde U_n$ is unitary for each  $n\geq 1$, with the suitable modifications of the
arguments which are used in proving $\tilde U$ is unitary.

Similarly, we define the unitary operator $ U_1 : \widehat {\m H}_E \to \m H_E \otimes \m P \oplus \big(
\displaystyle\bigoplus_{m \geq 1} ((\mathbb C^d)^{\otimes^ m} \otimes \m D_E)\big)
$ by the following formula
\begin{align}\label{5}
 U_1 ( h \oplus  \displaystyle \sum_{\alpha \in \tilde{\Lambda}} e_\alpha \otimes \eta_\alpha)
 = \sum_{j = 1}^d \big((E_j^* h + (D_E)_j^* \eta_\emptyset)\otimes \epsilon_j\big) \oplus \displaystyle
 \sum_{ |\alpha|\geq 1} e_\alpha \otimes \eta_\alpha
 \end{align}
where $ h \in \m H_E $ and $\eta_\alpha \in \m D_E$. For each $n \geq 2$, define the unitary
$$ U_n:  \m H_E \otimes \m P_{[1, n-1]}\oplus   \big(
\displaystyle\bigoplus_{m \geq n-1} ((\mathbb C^d)^{\otimes^ m} \otimes \m D_E)\big) \to \m H_E
 \otimes \m P_{[1, n]}\oplus   \big( \displaystyle\bigoplus_{m \geq n} ((\mathbb C^d)^{\otimes^ m} \otimes \m D_E)\big)
$$
for $ h_{j_1\ldots j_{n-1}} \in \m H_E$  and $\eta_{j_1 \ldots j_{n-1}},~\eta_\alpha \in \m D_E$ by
\begin{multline}
 U_n \bigg(\displaystyle \sum_{j_1,\ldots,j_{n-1} = 1}^d(h_{j_1\ldots j_{n-1}} \otimes
\epsilon_{j_1}\otimes \ldots \otimes \epsilon_{j_{n-1}})\\
 \oplus \displaystyle
 \sum_{j_1,\ldots,j_{n-1} = 1}^d (e_{j_1} \otimes \ldots \otimes e_{j_{n-1}}
\otimes \eta_{j_1 \ldots j_{n-1}})
 \oplus \displaystyle \sum_{|\alpha|\geq n} e_\alpha \otimes \eta_\alpha\bigg)\nonumber \\
\end{multline}
\begin{multline}\label{6}
 = \displaystyle \sum_{j_1,\ldots ,j_n = 1}^d \big((E_{j_n}^* h_{j_1\ldots j_{n-1}}
+ (D_E)_{j_n}^*\eta_{j_1 \ldots j_{n-1}}) \otimes  \epsilon_{j_1}\otimes \ldots
\otimes \epsilon_{j_n} \big)\oplus \displaystyle \sum_{  |\alpha|\geq n} e_\alpha \otimes \eta_\alpha.
\end{multline}

Since $\uu E$ is a lifting of $\uu C$, $E_j^* \tilde h = C_j^* \tilde h$
 for each $\tilde h \in \m H_C$. It follows from equations (\ref{3}), (\ref {4}), (\ref{5}), and (\ref{6}) that
\begin{equation}\label{7}
\tilde U_1 \tilde h = U_1 \tilde h~~\mbox{and}~~
 \tilde U_n(\tilde h \otimes \epsilon_{j_1}\otimes \ldots \otimes \epsilon_{j_{n-1}})
 = U_n(\tilde h \otimes \epsilon_{j_1}\otimes \ldots \otimes \epsilon_{j_{n-1}})
\end{equation}
for each $\tilde h \in \m H_C $, $1\leq j_1, \ldots ,j_{n-1} \leq d$, and $n\geq 2$.
For each $n\geq 1$, let $Q_n$ denote the orthogonal projection of
$\widehat {\m H}_C$ on to
 $\m H_C \oplus \big(
\displaystyle\bigoplus_{ m \leq n-1} ((\mathbb C^d)^{\otimes^ m} \otimes \m D_C)\big)$, and let $P_n$
denote the orthogonal projection of
 $\m H_E \otimes \m P_{[1, n]}\oplus   \big( \displaystyle \bigoplus_{m \geq n}
 ((\mathbb C^d)^{\otimes^ m} \otimes \m D_E)\big)$ on to $\m H_C \otimes \m P_{[1, n]}$.

We are now ready to prove the main result of this section.

\begin{theorem}
 If $P_n$ and $Q_n$ are the orthogonal projections as defined above for each $n \geq 1$, then
\[
 sot-\displaystyle\lim_{n\to\infty} \tilde U_1^*\ldots \tilde U_n^*P_n U_n\ldots U_1.
\]
exists. This limit is a coisometry, say $\widehat W :\widehat {\m H}_E \to \widehat {\m H}_C$.
 Its adjoint $\widehat W^* :  \widehat {\m H}_C \to \widehat {\m H}_E$ is given by
\[
sot-\displaystyle\lim_{n\to\infty} U_1^*\ldots U_n^*\tilde U_n\ldots\tilde U_1 Q_n
\]
Here sot denotes for the strong operator topology.
\end{theorem}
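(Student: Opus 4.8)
The plan is to set $W_n:=\tilde U_1^*\cdots\tilde U_n^*\,P_n\,U_n\cdots U_1$, a contraction $\widehat{\m H}_E\to\widehat{\m H}_C$ for every $n$ (a product of unitaries and the projection $P_n$). The whole argument is organized around a single \emph{consistency relation}, $Q_n W_m=W_n$ for all $m\ge n\ge 1$, which exhibits $(W_nx)_n$ as a bounded martingale with respect to the increasing family of subspaces $\im Q_n$. To prepare for it, I would first record the graded behaviour of the unitaries appearing in (\ref{3})--(\ref{6}): iterating the surjectivity computation already carried out for $\tilde U$, one checks that $\tilde U_n\cdots\tilde U_1$ maps $\im Q_n=\m H_C\oplus\bigoplus_{0\le m\le n-1}((\mathbb{C}^d)^{\otimes m}\otimes\m D_C)$ isometrically onto $\m H_C\otimes\m P_{[1,n]}$, while each summand $(\mathbb{C}^d)^{\otimes m}\otimes\m D_C$ with $m\ge n$ (that is, $\ker Q_n$) passes unchanged through every $\tilde U_k$ with $k\le n$. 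Hence $P_n\tilde U_n\cdots\tilde U_1=\tilde U_n\cdots\tilde U_1 Q_n$, equivalently $Q_n\tilde U_1^*\cdots\tilde U_n^*=\tilde U_1^*\cdots\tilde U_n^* P_n$, and moreover $\im W_n\subseteq\im Q_n$ because $P_n U_n\cdots U_1 x\in\m H_C\otimes\m P_{[1,n]}$. Taking adjoints in the definition of $W_n$ and using the first of these identities already gives the closed form
\[
W_n^*=U_1^*\cdots U_n^*\,P_n\,\tilde U_n\cdots\tilde U_1=U_1^*\cdots U_n^*\,\tilde U_n\cdots\tilde U_1\,Q_n,
\]
which is exactly the operator whose strong limit the statement identifies with $\widehat W^*$; so that part of the theorem comes for free once convergence is proved.

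I would then prove the consistency relation. Since $Q_nQ_{n+1}=Q_n$, it suffices to treat $m=n+1$. Substituting $Q_n\tilde U_1^*\cdots\tilde U_n^*=\tilde U_1^*\cdots\tilde U_n^* P_n$ into $W_{n+1}=\tilde U_1^*\cdots\tilde U_n^*\,\tilde U_{n+1}^* P_{n+1}U_{n+1}(U_n\cdots U_1)$ and cancelling the outer unitaries, the assertion $Q_n W_{n+1}=W_n$ reduces to the operator identity $P_n\,\tilde U_{n+1}^*\,P_{n+1}\,U_{n+1}=P_n$ on $\m H_E\otimes\m P_{[1,n]}\oplus\bigoplus_{m\ge n}((\mathbb{C}^d)^{\otimes m}\otimes\m D_E)$; inserting the formulas (\ref{6}) for $U_{n+1}$ and (\ref{4}) for $\tilde U_{n+1}^*$, this reduces in turn to the pointwise identity
\[
\sum_{j=1}^{d}C_j\,P_{\m H_C}\big(E_j^*k+(D_E)_j^*z\big)=P_{\m H_C}k,\qquad k\in\m H_E,\ z\in\m D_E.
\]
The term with $E_j^*k$ equals $P_{\m H_C}k$ because $\sum_jC_jC_j^*=I_{\m H_C}$ and $\sum_jC_jB_j^*=0$, both read off the block form (\ref{a}) from $\sum_jE_jE_j^*=I$; the term with $(D_E)_j^*z$ vanishes because $C_j^*g=E_j^*g$ for $g\in\m H_C$ by the lifting relation (\ref{7}), so that $\sum_j\langle P_{\m H_C}(D_E)_j^*z,\,C_j^*g\rangle=\langle z,\,D_E\,\uu E^*g\rangle=0$ for every $g\in\m H_C$, using $D_E\,\uu E^*=(I-\uu E^*\uu E)^{1/2}\uu E^*=\uu E^*(I-\uu E\,\uu E^*)^{1/2}=0$ since $\uu E$ is coisometric. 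I expect this to be the one genuinely delicate step: it combines the bookkeeping of how $U_{n+1}$ and $\tilde U_{n+1}^*$ act on the graded summands with the pointwise identity above, and it is precisely here that the coisometry of $\uu E$ and the lifting structure are used.

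The remainder is formal. Fix $x\in\widehat{\m H}_E$. For $m\ge n$, $W_mx-W_nx=(I-Q_n)W_mx$ is orthogonal to $\im Q_n\ni W_nx$, so $\|W_mx\|^2=\|W_nx\|^2+\|W_mx-W_nx\|^2$; thus $(\|W_nx\|^2)_n$ is nondecreasing, bounded above by $\|x\|^2$, hence convergent, and $\|W_mx-W_nx\|^2=\|W_mx\|^2-\|W_nx\|^2\to0$ uniformly in $m\ge n$. So $(W_nx)$ is Cauchy, and I take $\widehat W$ to be the resulting strong-operator limit of the sequence $W_n$, a contraction $\widehat{\m H}_E\to\widehat{\m H}_C$. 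Taking adjoints in $Q_nW_m=W_n$ gives $W_m^*Q_n=W_n^*$; letting $m\to\infty$ (the left side tends weakly to $\widehat W^*Q_n$ since $W_m\to\widehat W$ strongly, while the right side is constant in $m$) yields $W_n^*=\widehat W^*Q_n$; and since $\bigcup_n\im Q_n$ is dense in $\widehat{\m H}_C$ we have $Q_n\to I$ strongly, so $W_n^*\to\widehat W^*$ strongly, which together with the closed form above is the claimed description of $\widehat W^*$. Finally, for $y\in\im Q_n$ we have $W_n^*y=U_1^*\cdots U_n^*\,\tilde U_n\cdots\tilde U_1\,y$, and since $\tilde U_n\cdots\tilde U_1$ maps $\im Q_n$ isometrically into $\m H_E\otimes\m P_{[1,n]}\oplus\bigoplus_{m\ge n}((\mathbb{C}^d)^{\otimes m}\otimes\m D_E)$ and $U_1^*\cdots U_n^*$ is unitary there, $\|W_n^*y\|=\|y\|$; as $W_n^*y=\widehat W^*y$ for such $y$, this shows $\widehat W^*$ is isometric on the dense set $\bigcup_n\im Q_n$, hence $\widehat W$ is a coisometry, completing the proof.
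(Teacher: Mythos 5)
Your proof is correct, but it is organized quite differently from the paper's. The paper attacks the adjoint first: it shows that $s_n=U_1^*\cdots U_n^*\tilde U_n\cdots\tilde U_1 Q_n\xi$ \emph{stabilizes} ($s_k=s_{k+j}$) on vectors $\xi$ supported on words of length $\le k-1$, the only input being relation (\ref{7}) (that $\tilde U_n$ and $U_n$ agree on $\m H_C\otimes\m P_{[1,n-1]}$, i.e.\ the lifting relation); since the stabilized value is a product of unitaries applied to $\xi$, the isometry of the limit is immediate, $\widehat W$ is then \emph{defined} as the adjoint of this isometry, and the formula $\widehat W=\mathrm{sot}\text{-}\lim\tilde U_1^*\cdots\tilde U_n^*P_nU_n\cdots U_1$ is recovered afterwards by a duality computation. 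You instead work with $W_n=\tilde U_1^*\cdots\tilde U_n^*P_nU_n\cdots U_1$ directly, and your key lemma is the consistency relation $Q_nW_{n+1}=W_n$, which you correctly reduce to the pointwise identity $\sum_j C_jP_{\m H_C}\bigl(E_j^*k+(D_E)_j^*z\bigr)=P_{\m H_C}k$ and verify using $\sum_jC_jC_j^*=I$, $\sum_jC_jB_j^*=0$ (off-diagonal block of $\sum_jE_jE_j^*=I$) and $D_E\,\uu E^*=\uu E^*(I-\uu E\,\uu E^*)^{1/2}=0$; convergence then follows from the martingale/Pythagoras argument and the coisometry property is extracted at the end from $W_m^*Q_n=W_n^*$. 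What the paper's route buys is economy: the stabilization of the adjoint sequence needs nothing beyond (\ref{7}), and isometry comes for free. What your route buys is a structural explanation of why the non-adjoint sequence converges (a genuine nested-projection martingale) together with a transparent accounting of exactly where the coisometric hypothesis on $\uu E$ and the block structure (\ref{a}) enter; the single-step identity $P_n\tilde U_{n+1}^*P_{n+1}U_{n+1}=P_n$ is the one computation the paper never has to perform explicitly. Two small cosmetic points: you use the symbol $P_n$ for both the projection on the $E$-side codomain and the analogous projection on the $C$-side codomain (harmless but worth flagging), and the lifting relation $E_j^*|_{\m H_C}=C_j^*|_{\m H_C}$ is stated just before (\ref{7}) rather than being (\ref{7}) itself.
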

\begin{proof}
Let us begin by the dense linear manifold $\displaystyle \bigcup_{l \geq 1} \bigg(\m H_C \oplus
\big(\displaystyle\bigoplus_{ m \leq l -1} ((\mathbb C^d)^{\otimes^ m} \otimes
\m D_C)\big)\bigg)$ of $\widehat {\m H}_C$. Assume $\tilde h \in \tilde {\m H}$ and $y_\alpha \in \m D_C$
for all $\alpha \in \tilde \Lambda$ such that $|\alpha|\leq k-1$,
for some positive integer $k$.
We show that
 \[
 \displaystyle\lim_{n\to\infty} U_1^*\ldots U_n^*\tilde U_n\ldots\tilde U_1 Q_n \Big
(\tilde h \oplus \sum_ { |\alpha|\leq k-1} e_\alpha \otimes y_\alpha\Big)
 \]
exists.
For each $n \geq 1$,
set $s_n = U_1^*\ldots U_n^*\tilde U_n\ldots\tilde U_1 Q_n \Big (\tilde h \oplus
 \displaystyle\sum_{ |\alpha|\leq k-1}
e_\alpha \otimes y_\alpha\Big)$. It follows from equation (\ref{7}) that
$s_k = s_{k + j}$ for each $j \geq 1$. Thus
 \[
  \displaystyle\lim_{n\to\infty} U_1^*\ldots U_n^*\tilde U_n\ldots\tilde U_1 Q_n \Big
(\tilde h \oplus \sum_{|\alpha|\leq k-1}
 e_\alpha \otimes y_\alpha\Big)
 \]
exists and it is $s_k$.
We observe that
\begin{eqnarray*}
&&\| \displaystyle\lim_{n\to\infty} U_1^*\ldots U_n^*\tilde U_n\ldots\tilde U_1 Q_n \Big
(\tilde h \oplus \sum_{  |\alpha|\leq k-1}
 e_\alpha \otimes y_\alpha\Big) = \|s_k\|\\
& =& \|U_1^*\ldots U_k^*\tilde U_k\ldots
\tilde U_1 Q_k \Big (\tilde h \oplus \displaystyle\sum_{ |\alpha|\leq k-1} e_\alpha \otimes y_\alpha\Big)\|\\
\nonumber\\
\nonumber &=& \|U_1^*\ldots U_k^*\tilde U_k\ldots\tilde U_1 \Big
(\tilde h \oplus \displaystyle\sum_{|\alpha|\leq k-1}
 e_\alpha \otimes y_\alpha\Big)\|\\
& = &\|\tilde h \oplus \displaystyle\sum_{|\alpha|\leq k-1}
 e_\alpha \otimes y_\alpha \|.\\
\end{eqnarray*}
Hence $sot-\displaystyle\lim_{n\to\infty} U_1^*\ldots U_n^*\tilde U_n\ldots\tilde U_1 Q_n$
 defines an isometry on the dense linear manifold $\displaystyle \bigcup_{l\geq 1}
\bigg(\m H_C \oplus  \big(\displaystyle\bigoplus_{m \leq l -1}
((\mathbb C^d)^{\otimes^ m} \otimes \m D_C)\big)\bigg)$  of $ \widehat {\m H}_C$.
By continuity, it extends to an isometry $\widehat R$ from $ \widehat {\m H}_C$ to $ \widehat {\m H}_E$.
Its adjoint $\widehat R^*: \widehat {\m H}_E \to \widehat {\m H}_C$ is a coisomerty and so just need to rename
$\widehat R^*$ as $\widehat W$.

Let $ h \in \m H_E$ and $\eta_\alpha \in \m D_E$  for all $\alpha\in \tilde \Lambda$ 
 such that $|\alpha|\leq k-1$, for some positive integer $k$.
Let $n \geq k$, $\tilde  h \in \m H_C$  and $y_\beta \in \m D_C$ for all $\beta \in \tilde \Lambda,$
 $|\beta|\leq n-1$. Then
\begin{eqnarray*}
&&  \big\langle \widehat W  \big(h \oplus \displaystyle\sum_{|\alpha|\leq k-1} e_\alpha \otimes \eta_\alpha\big) , ~\tilde
  h \oplus \displaystyle\sum_{|\beta|\leq n-1}
 e_\beta \otimes y_\beta \big\rangle \\
  &=&  \big \langle  h \oplus \displaystyle\sum_{ |\alpha|\leq k-1} e_\alpha \otimes \eta_\alpha ,
~ U_1^*\ldots U_n^*\tilde U_n\ldots\tilde
 U_1 Q_n\big( \tilde  h \oplus \displaystyle\sum_{|\beta|\leq n-1} e_\beta \otimes y_\beta \big)\big\rangle \\
 &=&\big \langle  \tilde U_1^* \ldots \tilde U_n^*
P_n U_n\ldots  U_1 \big(h \oplus \displaystyle\sum_{ |\alpha|\leq k -1} e_\alpha \otimes \eta_\alpha \big),
~  \tilde  h \oplus \displaystyle\sum_{|\beta|\leq n-1} e_\beta \otimes y_\beta \big\rangle. \\
 \end{eqnarray*}
It follows from the above calculations that
\[
Q_n \widehat W\big(h \oplus \displaystyle\sum_{ |\alpha|\leq k-1} e_\alpha \otimes \eta_\alpha\big) = \tilde U_1^*
\ldots \tilde U_n^*  P_n U_n\ldots U_1
 \big(h \oplus \displaystyle\sum_{|\alpha|\leq k-1} e_\alpha \otimes \eta_\alpha \big).
\]
By the fact that $sot- \displaystyle\lim_{n \to\infty} Q_n = I,$ we conclude that
\[
 \widehat W\big(h \oplus \displaystyle\sum_{|\alpha|\leq k-1} e_\alpha \otimes \eta_\alpha\big)
= \displaystyle\lim_{n \to\infty} \tilde U_1^* \ldots \tilde U_n^*
 P_n U_n \ldots U_1 \big(h \oplus \displaystyle\sum_{|\alpha|\leq k-1} e_\alpha \otimes \eta_\alpha \big).
\]
Finally, we extend this formula to the whole of $\widehat {\m H}_E$ by continuity.
\end{proof}

The following result shows that the coisometry $\widehat W$ and its adjoint $\widehat W^*$
intertwine the tuples $\uu {\widehat V}^E$ and $\uu {\widehat V}^C$.

\begin{proposition}\label{2.2}
For $ j= 1,\ldots ,d$, $\widehat W \widehat V_j^E = \widehat V_j^C \widehat W $  and
  $~\widehat V_j^E \widehat W^* = \widehat W^* \widehat V_j^C$.
\end{proposition}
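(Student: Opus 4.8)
The plan is to verify the intertwining relation first for $\widehat{W}^*$, since $\widehat{W}^*$ was the operator defined constructively (as $\widehat{R}$) on the dense linear manifold $\bigcup_{l\geq 1}\big(\m H_C\oplus\bigoplus_{m\leq l-1}((\mathbb C^d)^{\otimes^m}\otimes\m D_C)\big)$, and then obtain the relation for $\widehat{W}$ by taking adjoints. So the first step is to take a typical basis-type vector $x=\tilde h\oplus\sum_{|\alpha|\leq k-1}e_\alpha\otimes y_\alpha$ in this dense manifold and compute both $\widehat{W}^*\widehat V_j^C x$ and $\widehat V_j^E\widehat{W}^*x$. Using Popescu's explicit formula (\ref{2}) for $\widehat V_j^C$, the vector $\widehat V_j^C x$ still lies in a manifold of the same type, now with words of length at most $k$; hence the theorem gives $\widehat{W}^*\widehat V_j^C x = s_{k+1}' := U_1^*\ldots U_{k+1}^*\tilde U_{k+1}\ldots\tilde U_1 Q_{k+1}(\widehat V_j^C x)$, which by the stabilization argument in the theorem's proof is already attained at level $k+1$.

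The heart of the matter is then a bookkeeping identity: I expect to show that the operators $U_1,\ldots,U_n$ and $\tilde U_1,\ldots,\tilde U_n$ are designed precisely so that one level of the $\tilde U$-composition, applied after the $Q$-truncation, implements exactly Popescu's dilation step (\ref{2}) for $\uu C$ — and symmetrically for $\uu E$ on the other side. Concretely I would check that $\tilde U_{n+1}\ldots\tilde U_1 Q_{n+1}\,\widehat V_j^C = (\text{shift by }\epsilon_j)\circ\tilde U_n\ldots\tilde U_1 Q_n$ in the appropriate sense, and likewise $U_{n+1}\ldots U_1\,\widehat V_j^E = (\text{shift by }\epsilon_j)\circ U_n\ldots U_1$, where the "shift by $\epsilon_j$" on $\m H\otimes\m P_{[1,n]}\oplus(\ldots)$ inserts $\epsilon_j$ in the first tensor slot; crucially these two shifts are compatible with the projections $P_n$. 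Combining these with the definitions of $Q_n$ and $P_n$, the finite-level expressions for $\widehat{W}^*\widehat V_j^C x$ and $\widehat V_j^E\widehat{W}^*x$ coincide. Then density and boundedness extend the identity $\widehat V_j^E\widehat{W}^* = \widehat{W}^*\widehat V_j^C$ to all of $\widehat{\m H}_C$, and taking adjoints yields $\widehat{W}\widehat V_j^E = \widehat V_j^C\widehat{W}$.

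An alternative, possibly cleaner route avoids the finite-level juggling: since $\uu{\widehat V}^C$ and $\uu{\widehat V}^E$ are the minimal isometric dilations of the coisometric tuples $\uu C$ and $\uu E$, and $\widehat{W}^*$ restricted to $\m H_C$ lands in $\m H_E$ and agrees there with the inclusion (this should fall out of (\ref{7}) at level $1$, where $\tilde U_1\tilde h = U_1\tilde h$), one can use the characterization of the dilation together with the lifting relation $E_j^*\tilde h = C_j^*\tilde h$ to propagate the intertwining from the model space $\m H_C$ to the whole Fock-space tail. The main obstacle in either approach is the same: getting the index/tensor bookkeeping exactly right so that the $\epsilon_j$ inserted by the dilation step on the $\uu C$ side matches, after passing through $P_n$, the $\epsilon_j$ inserted on the $\uu E$ side — in other words, checking that $P_n$ genuinely commutes with these shift maps and with the partial compositions, which is where the asymmetry between $\widehat{\m H}_C$ (a direct summand inside $\widehat{\m H}_E$-type data) and $\widehat{\m H}_E$ has to be handled carefully.
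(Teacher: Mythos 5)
Your sketch of the second relation is essentially the paper's argument for that relation: on the dense manifold one uses that $\tilde U_1\widehat V_j^C$ acts as ``insert $\epsilon_j$ in the $\mathcal P$-slot and prepend $e_j$ to the Fock part'' (via $(D_C)_i^*(D_C)_j=\delta_{ij}I-C_i^*C_j$), that this insertion is compatible with the later $U_k,\tilde U_k$, and that the limit defining $\widehat W^*$ stabilizes at a finite level; continuity then gives $\widehat V_j^E\widehat W^*=\widehat W^*\widehat V_j^C$. The genuine gap is your last step: ``taking adjoints yields $\widehat W\widehat V_j^E=\widehat V_j^C\widehat W$.'' The adjoint of $\widehat V_j^E\widehat W^*=\widehat W^*\widehat V_j^C$ is $\widehat W(\widehat V_j^E)^*=(\widehat V_j^C)^*\widehat W$, which is a different statement, and since $\widehat W$ is only a coisometry (not unitary) the two relations in the proposition are not formally equivalent. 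Indeed, from $\widehat V_j^E\widehat W^*=\widehat W^*\widehat V_j^C$ and $\widehat W\widehat W^*=I$ one only gets $\widehat W\widehat V_j^E\,\widehat W^*\widehat W=\widehat V_j^C\widehat W$; the desired identity $\widehat W\widehat V_j^E=\widehat V_j^C\widehat W$ is then equivalent to the extra fact that $\widehat V_j^E$ leaves $\ker\widehat W=(\mbox{Range}\,\widehat W^*)^\perp$ invariant. Your relation gives invariance of $\mbox{Range}\,\widehat W^*$ under $\widehat V_j^E$, but an isometry can map a subspace into itself without preserving its orthogonal complement (the unilateral shift and the range of the shift already show this), so the needed invariance does not follow formally and must be proved.

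The paper closes exactly this gap by proving the first relation separately, with a duality computation rather than by adjoints: it pairs $\widehat W\widehat V_j^E\big(h\oplus\sum_\alpha e_\alpha\otimes\eta_\alpha\big)$ against vectors $\tilde h\oplus\sum_{|\beta|\leq n-1}e_\beta\otimes y_\beta$ from the dense manifold of $\widehat{\mathcal H}_C$, replaces $\widehat W^*$ on such vectors by its finite-level form $U_1^*\ldots U_n^*\tilde U_n\ldots\tilde U_1$, and peels off one $U_1,\tilde U_1$ pair (precisely your ``shift by $\epsilon_j$'' bookkeeping) to arrive at $\big\langle h\oplus\cdots,\ \widehat W^*(\widehat V_j^C)^*(\cdots)\big\rangle$, which gives $\widehat W\widehat V_j^E=\widehat V_j^C\widehat W$. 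So to complete your proof you must either add such a direct computation for $\widehat W\widehat V_j^E$ or prove independently that $\ker\widehat W$ is $\widehat V_j^E$-invariant; as written, the first half of the proposition is not established.
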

\begin{proof}
Let $ h \in \m H_E$ and $\eta_\alpha \in \m D_E$ for all $\alpha \in \tilde \Lambda$ such that
 $\sum_{\alpha \in \tilde \Lambda} \|\eta_\alpha\|^2 < \infty$.
Suppose $\tilde  h \in \m H_C$  and $y_\beta \in \m D_C$ for all  $\beta \in \tilde \Lambda$
 such that $|\beta|\leq n-1$, for some positive integer $n$. Then
\begin{eqnarray*}
  && \big\langle \widehat W \widehat V_j^E \big( h\oplus \displaystyle
\sum_{\alpha \in \tilde \Lambda} e_\alpha \otimes \eta_\alpha\big),
~ \tilde h\oplus  \displaystyle\sum_{|\beta|\leq n-1}  e_\beta \otimes y_\beta \big \rangle\\
  & = &  \big\langle \widehat V_j^E  \big( h\oplus \displaystyle \sum_{\alpha \in \tilde \Lambda}
 e_\alpha \otimes \eta_\alpha\big),
~ \widehat W^* \big( \tilde h\oplus
\displaystyle\sum_{|\beta|\leq n-1} e_\beta \otimes y_\beta  \big)\big\rangle\\
  & = &  \big\langle E_j h + e_\emptyset \otimes (D_E)_j h +
e_j \otimes  \displaystyle \sum_{\alpha \in \tilde \Lambda}
e_\alpha \otimes \eta_\alpha ,
~ U_1^*\ldots U_n^*\tilde U_n\ldots\tilde U_1
\big( \tilde h\oplus  \displaystyle\sum_{
 |\beta|\leq n-1}  e_\beta \otimes y_\beta  \big)\big\rangle\\
& = &  \big\langle U_1^*\big(h \otimes \epsilon_j \oplus e_j \otimes
 \displaystyle \sum_{\alpha \in \tilde \Lambda} e_\alpha \otimes
\eta_\alpha\big ), ~U_1^*\ldots U_n^*\tilde U_n\ldots\tilde U_2
 \big( \displaystyle \sum_{ i= 1}^d ((C_i^* \tilde h +
(D_C)_i^*y_\emptyset) \otimes  \epsilon_i) \\
&& \oplus  \displaystyle\sum_{ 1\leq|\beta|\leq n-1}
 e_\beta \otimes y_\beta \big)\big\rangle\\
 \end{eqnarray*}
\begin{eqnarray*}
 & = &  \big\langle \big(h \otimes \epsilon_j \oplus e_j \otimes  \displaystyle \sum_{\alpha \in \tilde
 \Lambda} e_\alpha \otimes \eta_\alpha\big ), ~ U_2^*\ldots U_n^*\tilde U_n\ldots\tilde U_2
 \big( \displaystyle \sum_{ i= 1}^d ((C_i^* \tilde h + (D_C)_i^*y_\emptyset) \otimes  \epsilon_i)
 \\
&& \oplus \displaystyle\sum_{1\leq |\beta|\leq n-1}
  e_\beta \otimes y_\beta \big)\big\rangle\\
& = &  \big\langle  h\oplus \displaystyle \sum_{\alpha \in \tilde \Lambda} e_\alpha \otimes \eta_\alpha,
~U_1^*\ldots U_{n-1}^*\tilde U_{n-1} \ldots \tilde U_1 \big( C_j^* \tilde h + (D_C)_j^*y_\emptyset
\oplus L_j^* \otimes I  (\displaystyle\sum_{ \ 1 \leq|\beta|\leq n-1}  e_\beta \otimes y_\beta)\big)\big\rangle\\
   & = &  \big\langle  h\oplus \displaystyle \sum_{\alpha \in \tilde \Lambda} e_\alpha \otimes \eta_\alpha,
~ \widehat  W^*(\widehat V_j^C)^*\big(\tilde h\oplus  \displaystyle\sum_{|\beta|\leq n-1}
 e_\beta \otimes y_\beta\big) \big \rangle\\
   &=& \big\langle \widehat V_j^C \widehat W \big( h\oplus \displaystyle
\sum_{\alpha \in \tilde \Lambda} e_\alpha \otimes \eta_\alpha\big),
 ~ \tilde h\oplus  \displaystyle\sum_{
|\beta|\leq n-1} e_\beta \otimes y_\beta\big \rangle.\\
 \end{eqnarray*}
Thus
 $\widehat W\widehat V_j^E = \widehat V_j^C \widehat W$ for $ j= 1,\ldots ,d.$

\begin{eqnarray*}
  \widehat W^* \widehat V_j^C \big (\tilde h\oplus  \displaystyle\sum_{|\alpha|\leq n-1} e_\alpha  \otimes y_\alpha\big)
&=& \widehat W^* \tilde  U_1^*\big (\tilde h \otimes \epsilon_j \oplus e_j \otimes \displaystyle\sum_{
|\alpha|\leq n-1} e_\alpha  \otimes y_\alpha \big)\\
&=& U_1^*\ldots U_{n+1}^*\tilde U_{n+1}\ldots \tilde U_1 \tilde  U_1^*\big(\tilde h \otimes
\epsilon_j  \oplus e_j \otimes \displaystyle\sum_{|\alpha|\leq n-1} e_\alpha  \otimes y_\alpha \big)\\
&=&  U_1^* U_2^*\ldots U_{n+1}^*\tilde U_{n+1}\ldots \tilde U_2 \big(\tilde h \otimes \epsilon_j
\oplus e_j \otimes \displaystyle\sum_{ |\alpha|\leq n-1} e_\alpha  \otimes y_\alpha \big)\\
&=& \widehat V_j^E \widehat W^* \big (\tilde h\oplus  \displaystyle\sum_{ |\alpha|\leq n-1} e_\alpha \otimes y_\alpha \big).
 \end{eqnarray*}
By continuity, this extends to all of $\widehat{\m H}_C$.
So $\widehat V_j^E \widehat W^* = \widehat W^* \widehat V_j^C$ for $ j= 1,\ldots ,d.$
\end{proof}

We have $(\widehat V_j^E)^* \tilde h = E_j^* \tilde h =  C_j^* \tilde h $ and
  $(\widehat V_j^C)^* \tilde h = C_j^*\tilde h$
 for each  $\tilde h \in \m H_C$,
 i.e., $\m H_C$ is covariant under $\widehat V_j^E$ and $\widehat V_j^C$. Thus $\widehat V_j^E (\m H_A \oplus
(\Gamma \otimes \m D_E)) \subset \m H_A \oplus (\Gamma \otimes \m D_E)$ and
$\widehat V_j^C(\Gamma \otimes \m D_C) \subset \Gamma \otimes \m D_C$.
 Define $V_j^E := \widehat V_j^E|_{\m H_A \oplus (\Gamma \otimes \m D_E)} :
 \m H_A \oplus (\Gamma \otimes \m D_E) \to \m H_A \oplus (\Gamma \otimes \m D_E)$ and
$V_j^C := \widehat V_j^C|_{ \Gamma \otimes \m D_C} :
 \Gamma \otimes \m D_C \to  \Gamma \otimes \m D_C$. In fact, $V_j^C = L_j \otimes I_{\m D_C}$.
Further note that $\widehat W \tilde h = \tilde h$ and $\widehat W^*\tilde h = \tilde h$
for each $\tilde h \in \m H_C$. Define
$$W^* := \widehat W^*|_{ \Gamma \otimes \m D_C} : \Gamma \otimes \m D_C \to \m H_A \oplus (\Gamma \otimes \m D_E). $$
It can be seen that $W$, the adjoint of $W^*$, is given by
$\widehat W|_{ \m H_A \oplus (\Gamma \otimes \m D_E)}:
 \m H_A \oplus (\Gamma \otimes \m D_E) \to \Gamma \otimes \m D_C.$
Then it follows from Proposition \ref{2.2}  that
\begin{align*}
  W V_j^E =  V_j^C  W
~\mbox{and}~
 V_j^E  W^* =  W^* V_j^C ~ \mbox{for} ~ j= 1,\ldots ,d.
\end{align*}

\section{Outgoing Cuntz Scattering System, Transfer Function and Characteristic Function of Lifting}
In order to define an outgoing Cuntz scattering system,
we need the following:


\begin{definition}\rm
\begin{itemize}\item [(1)]  Let  $\uu T = (T_1 ,\ldots, T_d)$ be a row isometry on a Hilbert space $\m L$.
A subspace $\m M$ of $\m L$ is called  \textit{wandering subspace} with respect to
$\uu T$ if
\[
 T_\alpha \m M \perp T_\beta \m M ~\mbox{for distinct}~ \alpha , \beta \in \tilde \Lambda.
\]
\item [(2)]  A  tuple $\uu T = (T_1 ,\ldots, T_d)$ on a Hilbert space $\m L$ is called a
 \textit{row unitary} if $\uu T$ is a row isometry and
$ \overline{span}_{j=1,\ldots ,d} T_j \m L = \m L $.
\item [(3)]  A tuple $\uu T = (T_1, \ldots, T_d)$ on a Hilbert space $\m L$  is called \textit{row shift}
 if $\uu T$ is a row isometry and there exits a wandering
 subspace $\m M$  of $\m L$ with respect to $\uu T$ such that
 $\m L = \oplus_{\alpha \in \tilde {\Lambda}} T_\alpha \m M$.
\end{itemize}
\end{definition}

We omit the proofs of the Theorem \ref{3.7} and Theorem \ref{b} in this section because they
follow using similar arguments as those in
sections 4 and 5 of \cite{DH11}.
In  Chapter 5 of \cite {BV05} an outgoing Cuntz scattering system is defined as a  collection
$$(\uu V = (V_1,\ldots, V_d),~ \m L,~\m G^+_*,~\m G) $$ such that
$\uu V$ is a row isometry on the Hilbert space $\m L$, and
$\m G^+_*$ and $\m G$ are subspaces of $\m L$ such that
\begin{itemize}
\item [(a)] $\m G_*^+$ is the smallest $\uu V-$invariant subspace containing
\[
 \m E_* := \m L \ominus \overline{span}_{j =1,\ldots , d} V_j \m L ;
\]
thus $\uu V|_{\m G_*^+}$ is a row shift and
$\m G_{*}^+ = \bigoplus _{\alpha \in \tilde \Lambda}  V_\alpha \m E_*$.
\item [(b)] $\uu V|_{\m G}$ is a row shift; thus
$\m G =  \bigoplus _{\alpha \in \tilde \Lambda}  V_\alpha \m E$
where $\m E := \m G \ominus \overline{span}_{j =1,\ldots, d} V_j \m G$.
\end{itemize}

Our goal is to find an outgoing Cuntz scattering system inside our model.
Let as before $\uu E$ be a coisometric lifting of a row contraction $\uu C$
by $\uu A$ and $\uu {\widehat V}^E$ be the minimal isometric dilation of $\uu E$
of the form given by equation (\ref{2}).
First we show that tuples $ \underline {\widehat  V}^E = (\widehat V_1^E,\ldots
,\widehat V_d^E)$
 and $\uu V^E = (V_1^E, \ldots, V_d^E)$ on Hilbert spaces $\widehat{\m H}_E$
and $\m H_A \oplus (\Gamma \otimes \m D_E)$ are row unitary and row isometry respectively.
 Since $ \underline {\widehat  V}^E = (\widehat V_1^E,\ldots
,\widehat V_d^E)$ is the minimal
 isometric dilation of $\uu E = (E_1, \ldots, E_d)$, it follows that
$\widehat V_j^E$'s are isometries with orthogonal ranges. So $V_j^E$'s are isometries with orthogonal ranges, because
\[
V_j^E = \widehat V_j^E|_{\m H_A \oplus (\Gamma \otimes \m D_E)} : \m H_A \oplus (\Gamma \otimes \m D_E) \to
\m H_A \oplus (\Gamma \otimes \m D_E)
 ~\mbox{for}~ j= 1, \ldots, d.
\]
Thus $\uu V^E = (V_1^E, \ldots, V_d^E)$ is a row isometry.
Also $\sum_{j=1}^d \widehat V_j^E (\widehat V_j^E)^* = I,$ i.e.,
$ \underline {\widehat V}^E = (\widehat V_1^E,\ldots
,\widehat V_d^E)$ is a row unitary.
Define  $\m E_* := W^*  (e_\emptyset \otimes \m D_C).$ We claim that $\m E_*$ is a
wandering subspace with respect to $\uu V^E$.
 It is enough to prove that
\[
 W^*( e_\emptyset \otimes \m D_C) \perp \overline{span}_{j=1,\ldots, d}V_j^E(\m H_A \oplus (\Gamma \otimes \m D_E)),
\]
since $V_j^E$'s are isometries with orthogonal ranges.  If $y\in \m D_C, h_a \in \m H_A$, and
$  \sum_{\alpha \in \tilde{\Lambda}} e_\alpha \otimes \eta_\alpha\in \Gamma \otimes \m D_E $, then
\begin{eqnarray*}
&&\langle~ W^* (e_\emptyset \otimes y), V_j^E(h_a \oplus  \sum_{\alpha \in
\tilde{\Lambda}} e_\alpha \otimes \eta_\alpha)~\rangle\\
 & = & \langle ~U_1^*\tilde {U_1}(e_\emptyset \otimes y) , E_j h_a \oplus e_\emptyset \otimes (D_E)_jh_a
\oplus  \sum_{\alpha \in \tilde{\Lambda}} e_j \otimes e_\alpha \otimes \eta_\alpha  ~\rangle \\
& = & \langle ~\sum_{i = 1}^d \big((D_C)_i^* y_\emptyset \otimes \epsilon_i\big), U_1  \big(E_j h_a
 \oplus e_\emptyset \otimes (D_E)_jh_a
\oplus  \sum_{\alpha \in \tilde{\Lambda}} e_j \otimes e_\alpha \otimes \eta_\alpha\big)~\rangle\\
& = & \langle ~\sum_{i = 1}^d \big((D_C)_i^* y_\emptyset \otimes \epsilon_i\big) ,
\sum_{i = 1}^d \big((E_i^*E_jh_a +(D_E)_i^*(D_E)_jh_a)
 \otimes \epsilon_i\big) \oplus  \sum_{\alpha \in \tilde{\Lambda}} e_j \otimes e_\alpha \otimes \eta_\alpha  ~\rangle\\
&=& \langle ~\sum_{i = 1}^d\big( (D_C)_i^* y_\emptyset \otimes \epsilon_i\big) , h_a
 \otimes \epsilon_j \oplus  \sum_{\alpha \in \tilde{\Lambda}} e_j \otimes e_\alpha \otimes \eta_\alpha  ~\rangle
=0,
\end{eqnarray*}
for $j=1,\ldots,d.$ The last equality holds because
$(D_C)_i^*y_\emptyset \in \m H_C$ for  $i= 1,\ldots, d$
 and $h_a \in \m H_A$. So our claim is established.

Our next aim is to prove that
\[
 \m E_* = \big(\m H_A \oplus (\Gamma \otimes \m D_E)\big) \ominus
 \overline{span}_{j=1,\ldots,d}V_j^E\big(\m H_A \oplus (\Gamma \otimes \m D_E)\big).
\]
The proof of the above claim shows that
\begin{equation}\label{3.1}
 \m E_* = W^*  (e_\emptyset \otimes \m D_C) \subset \big(\m H_A \oplus (\Gamma \otimes \m D_E)\big) \ominus
 \overline{span}_{j=1,\ldots,d}V_j^E\big(\m H_A \oplus (\Gamma \otimes \m D_E)\big).
\end{equation}
To show the reverse inclusion, let
$$x \in  \big(\m H_A \oplus (\Gamma \otimes \m D_E)\big) \ominus
 \overline{span}_{j=1,\ldots,d}V_j^E\big(\m H_A \oplus (\Gamma \otimes \m D_E)\big).$$
We can write $x = u \oplus v$ where $u \in W^*(e_\emptyset \otimes \m D_C)$ and
$v \in \big(\m H_A \oplus (\Gamma \otimes \m D_E)\big) \ominus W^*(e_\emptyset \otimes \m D_C).$
It follows from equation (\ref{3.1}) that
$$u \in  \big(\m H_A \oplus (\Gamma \otimes \m D_E)\big) \ominus
 \overline{span}_{j=1,\ldots,d}V_j^E\big(\m H_A \oplus (\Gamma \otimes \m D_E)\big).$$
Then
$v = x-u  \in \big(\m H_A \oplus (\Gamma \otimes \m D_E)\big) \ominus
 \overline{span}_{j=1,\ldots,d}V_j^E\big(\m H_A \oplus (\Gamma \otimes \m D_E)\big),$
i.e.,
\begin{equation}\label{3.2}
 v\perp \overline{span}_{j=1,\ldots,d}V_j^E\big(\m H_A \oplus
(\Gamma \otimes \m D_E)\big)  \big(= \overline{span}_{j=1,\ldots,d}\widehat
V_j^E\big(\m H_A \oplus (\Gamma \otimes \m D_E)\big)\big).
\end{equation}
Since $v\perp \m H_C =\widehat W^* \m H_C$ and
 $v\perp W^*(e_\emptyset \otimes \m D_C)  = \widehat W^* (e_\emptyset \otimes \m D_C)$, we conclude that
\begin{equation}\label{3.3a}
 v \perp \overline{span}_{j=1,\ldots,d}\widehat V_j^E\m H_C.
\end{equation}
By equations (\ref{3.2}) and (\ref{3.3a}), we see that
$v \perp \overline{span}_{j=1,\ldots,d}\widehat V_j^E\widehat{\m H}_E.$
We get $v\perp \widehat{\m H}_E$, since  $\widehat{\uu V}^E = (\widehat V_1^E, \ldots, \widehat V_d^E)$  is a
row unitary. Thus $v= 0$, and therefore $x = u \in W^*  (e_\emptyset \otimes \m D_C)$. This proves
the reverse inclusion.
Define $ \m G : = \Gamma \otimes \m D_E$.
Since $V_j^E|_{\m G } = L_j \otimes I_{\m D_E}$, it follows that $\uu V^E|_{\m G}= (V_1^E|_{\m G}, \ldots, V_d^E|_{\m G})$
is a row isometry, and $e_\emptyset \otimes \m D_E$ is a wandering subspace of $\m G$ with respect to $\uu V^E|_{\m G}$
such that
\[
 \m G = \bigoplus _{\alpha \in \tilde \Lambda}  V_\alpha^E ( e_\emptyset \otimes \m D_E).
\]
Thus $\uu V^E|_{\m G}= (V_1^E|_{\m G}, \ldots, V_d^E|_{\m G})  $ is a row shift.

We summarize the preceding discussion in the following:
\begin{theorem}
  A collection
\[
( \uu V^E = (V_1^E,\ldots, V_d^E), \m H_A \oplus (\Gamma \otimes \m D_E), \m G_{*}^+ =
 \bigoplus _{\alpha \in \tilde \Lambda}  V_\alpha^E \m E_*, \m G= \bigoplus _{\alpha \in \tilde \Lambda}  V_\alpha^E \m E )
\]
is an
outgoing Cuntz scattering system where $\m E_* = W^*(e_\emptyset \otimes \m D_C)$ and $\m E = e_\emptyset \otimes \m D_E$.
\end{theorem}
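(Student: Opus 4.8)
The plan is to verify that the collection satisfies the two defining axioms (a) and (b) of an outgoing Cuntz scattering system from Chapter 5 of \cite{BV05}, since the first component $\uu V^E$ has already been checked to be a row isometry on $\m H_A \oplus (\Gamma \otimes \m D_E)$ in the discussion preceding the statement. Almost all of the work has in fact been done in the paragraphs above: what remains is to assemble the pieces and record that the resulting data form the required quadruple.

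For axiom (a), I would first recall from the preceding discussion that $\m E_* = W^*(e_\emptyset \otimes \m D_C)$ was shown to be a wandering subspace for $\uu V^E$ (via the orthogonality computation ending in $0$), and then invoke the two-sided inclusion argument just completed, which gives
\[
 \m E_* = \big(\m H_A \oplus (\Gamma \otimes \m D_E)\big) \ominus \overline{\mathrm{span}}_{j=1,\ldots,d} V_j^E\big(\m H_A \oplus (\Gamma \otimes \m D_E)\big).
\]
So $\m E_*$ is exactly the ``defect space'' $\m L \ominus \overline{\mathrm{span}}_j V_j \m L$ with $\m L = \m H_A \oplus (\Gamma \otimes \m D_E)$. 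It then remains to observe that $\m G_*^+ := \bigoplus_{\alpha \in \tilde\Lambda} V_\alpha^E \m E_*$ is the smallest $\uu V^E$-invariant subspace containing $\m E_*$ — which is immediate because $V_j^E$ are isometries with mutually orthogonal ranges, so the sum is automatically orthogonal and invariant, and any invariant subspace containing $\m E_*$ must contain every $V_\alpha^E \m E_*$ — and hence $\uu V^E|_{\m G_*^+}$ is a row shift with wandering subspace $\m E_*$. For axiom (b), I would quote the final paragraph before the statement: setting $\m G := \Gamma \otimes \m D_E$ and using $V_j^E|_{\m G} = L_j \otimes I_{\m D_E}$, the tuple $\uu V^E|_{\m G}$ is a row isometry, $e_\emptyset \otimes \m D_E$ is wandering with respect to it, and $\m G = \bigoplus_{\alpha \in \tilde\Lambda} V_\alpha^E(e_\emptyset \otimes \m D_E)$, so that $\uu V^E|_{\m G}$ is a row shift with $\m E = \m G \ominus \overline{\mathrm{span}}_j V_j^E \m G = e_\emptyset \otimes \m D_E$.

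Combining these two verifications shows that $(\uu V^E, \m H_A \oplus (\Gamma \otimes \m D_E), \m G_*^+, \m G)$ meets the definition, which completes the proof. In truth there is no real obstacle here — the statement is a summary of facts established in the running text — so the ``hard part'' is purely bookkeeping: making sure the identifications $\m E_* = W^*(e_\emptyset \otimes \m D_C)$ and $\m E = e_\emptyset \otimes \m D_E$ are stated with the correct ambient spaces, and that the minimality clause in axiom (a) is explicitly addressed rather than taken for granted. Accordingly I expect the write-up to be a single short paragraph citing the preceding computations.
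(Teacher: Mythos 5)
Your proposal is correct and follows essentially the same route as the paper: the paper explicitly presents this theorem as a summary of the preceding discussion (the row-isometry property of $\uu V^E$, the wandering-subspace computation for $\m E_* = W^*(e_\emptyset \otimes \m D_C)$, the two-sided inclusion identifying $\m E_*$ with the defect space, and the row-shift structure of $\uu V^E|_{\m G}$ with $\m E = e_\emptyset \otimes \m D_E$), which is exactly what you assemble. Your explicit remark on the minimality clause in axiom (a) is a harmless, correct addition rather than a different approach.
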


Take the input space as $\m D_E$ and the output space as $\m D_C$. If
\[
 \tilde C : = \sum_{j=1}^d (D_C)_j P_{\m H_C} E^*_j : \m H_E \to \m D_C ~~\mbox{and}
~~\tilde D := \sum_{j=1}^d (D_C)_j P_{\m H_C} (D_E)^*_j : \m D_E \to \m D_C
\]
where $P_{\m H_C}$ is the orthogonal projection of $\m H_E$ on to $\m H_C$,
 then we define a \textit{colligation of operators} \cite{BV05} as follows:

\[
 \m C_{U,\tilde U} :=  \begin{pmatrix}
          E_1^* & (D_E)_1^* \\
      \vdots & \vdots\\
        E_d^* & (D_E)_d^*\\
     \tilde C & \tilde D
         \end{pmatrix}
: \m H_E \oplus \m D_E \to \displaystyle\bigoplus_{j = 1}^d \m H_E \oplus \m D_C.
\]
Consider the following $\tilde \Lambda$-\textit{linear system} $\sum_{U, \tilde U}$
or
non-commutative Fornasini-Marchesini system in (cf. \cite{BGM06})
associated to the colligation $\m C_{U,\tilde U}$:
\begin{eqnarray*}
 x(j\alpha)  = E_j^* x(\alpha) + (D_E)_j^* u(\alpha)~~ \mbox{and}~
~ y(\alpha)   =   \tilde Cx(\alpha) + \tilde D u(\alpha)
\end{eqnarray*}
where $ j =  1, \ldots, d $ and $\alpha ,j\alpha$ are words in $ \tilde \Lambda$, and
\[
  x : \tilde \Lambda \to \m H_E,~~~ u : \tilde \Lambda \to \m D_E,~~~ y :\tilde \Lambda \to \m D_C.
\]

Let  $z= (z_1,\ldots, z_d)$  be a d-tuple of formal non-commuting indeterminates.
Define Fourier transforms of $x$, $u$ and $y$ as

 \[
 \hat x(z) = \displaystyle \sum_{\alpha \in \tilde \Lambda} x(\alpha) z^{\alpha},~~
\hat u(z) =  \sum_{\alpha \in \tilde \Lambda} u(\alpha) z^{\alpha},~
~ \hat y(z) = \sum_{\alpha
 \in \tilde \Lambda} y(\alpha) z^{\alpha}
\]
respectively
where  $z^\alpha=z_{\alpha_n}\ldots  z_{\alpha_1}$ for
$\alpha = \alpha_n\ldots \alpha_1 \in \tilde \Lambda$.
If we assume that $x(\emptyset) = 0$ and $z$-variables commute with the coefficients, then we get the
input-output relation
\[
 \hat y(z) = \Theta_{U,\tilde U} (z) \hat u(z)
\]
where $\Theta_{U,\tilde U}$ as a formal non-commutative  power series is given by the following:
\begin{equation*}
 \Theta_{U,\tilde U}(z) := \displaystyle \sum_{\alpha \in \tilde \Lambda} \Theta_{U,\tilde U}^{(\alpha)}
z^\alpha := \tilde D + \tilde C  \displaystyle \sum_{\substack {\beta \in \tilde
\Lambda \\ j = 1,\ldots ,d}}(E_{\bar\beta})^* (D_E)_j^* z^{\beta j}.
 \end{equation*}
Here $\bar \beta = \beta_1\ldots \beta_n$  is the reverse of $\beta  = \beta_n \ldots \beta_1$ and
$\Theta_{U,\tilde U}^{(\alpha)}$ are operators from $\m D_E$ to $\m D_C$.
We refer $\Theta_{U,\tilde U}$  as \textit{transfer function} associated to the unitaries $U$ and $\tilde U$.
For a Hilbert space $\m V$, a non-commutative analogue of Hardy space is
the space $ \ell^2(\tilde \Lambda , \m V)$ of formal power series
$g(z) =  \sum_{\alpha \in \tilde
\Lambda} g_\alpha z^\alpha$ with $\| g\|^2_{\ell^2} =  \sum_{\alpha \in \tilde
\Lambda}\| g_\alpha\|^2 < \infty$  where $g_\alpha \in \m V.$ The following theorem shows
that the formal non-commutative power series  $\Theta_{U,\tilde U}$
turns out to be a contractive operator between Hilbert spaces.

\begin{theorem}\label{3.7}
 The map $M_{\Theta_{U, \tilde U}} : \ell^2(\tilde \Lambda , \m D_E) \to \ell^2(\tilde \Lambda , \m D_C)$ defined by
\[
 M_{\Theta_{U, \tilde U}} \hat u(z) :=  \Theta_{U,\tilde U}(z) \hat u(z)
\]
is a contraction .
\end{theorem}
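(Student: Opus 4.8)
The plan is to show that $M_{\Theta_{U,\tilde U}}$ is nothing but the compression of a multi-analytic-type operator built from the coisometry $\widehat W$ of the main theorem, composed with an inclusion and a projection, and is therefore automatically contractive. More precisely, I would first observe that the transfer function arises exactly as the input-output map of the colligation $\m C_{U,\tilde U}$, and that the colligation was assembled from the unitaries $U$, $\tilde U$ and the projection $P_{\m H_C}$, which are precisely the pieces encoding how $\widehat W$ (equivalently $W$) intertwines $\uu{\widehat V}^E$ with $\uu{\widehat V}^C$. Since section 2 gives us $\widehat W \widehat V_j^E = \widehat V_j^C \widehat W$ and, on the relevant pieces, $W V_j^E = V_j^C W$ with $W^*$ an isometry (so $W$ is a coisometry), the natural strategy is: identify a Hilbert-space operator $X$ acting between $\Gamma\otimes\m D_E$ and $\Gamma\otimes\m D_C$ (or suitable extensions thereof) that intertwines the row isometries $\uu L\otimes I$ on both sides, whose matrix entries with respect to the Fock-basis decomposition reproduce exactly the coefficients $\Theta_{U,\tilde U}^{(\alpha)}$, and whose norm is controlled because it factors through $\widehat W$ or $W$.

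Concretely, I would carry out the following steps. First, unwind the recursion $x(j\alpha)=E_j^*x(\alpha)+(D_E)_j^*u(\alpha)$ with $x(\emptyset)=0$ to get the closed form $x(\beta j)=\sum(E_{\bar\beta})^*(D_E)_j^*u(\cdot)+\cdots$ used to write $\Theta_{U,\tilde U}$, thereby confirming the stated power-series formula. Second, rewrite each coefficient $\Theta_{U,\tilde U}^{(\alpha)}:\m D_E\to\m D_C$ intrinsically as $P_{\m D_C}\,(\text{block of }\widehat W\text{ or }W)\,|_{\m D_E}$ by comparing $\tilde C=\sum_j (D_C)_j P_{\m H_C}E_j^*$ and $\tilde D=\sum_j (D_C)_j P_{\m H_C}(D_E)_j^*$ with the formulas defining $U$, $\tilde U$ and hence $W^*$ as $U_1^*\tilde U_1$ on $e_\emptyset\otimes\m D_C$. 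Third, use the intertwining relations $W V_j^E = V_j^C W$ together with $V_j^C=L_j\otimes I_{\m D_C}$ and the fact that $\uu V^E|_{\m G}$ with $\m G=\Gamma\otimes\m D_E$ is a row shift (established just above the previous theorem) to show that the operator $g\mapsto \Theta_{U,\tilde U}(z)g(z)$ on $\ell^2(\tilde\Lambda,\m D_E)$ is unitarily equivalent to $P\, W\, \iota$, where $\iota$ embeds $\Gamma\otimes\m D_E$ as the wandering-generated part and $P$ projects onto $\Gamma\otimes\m D_C$. Fourth, conclude $\|M_{\Theta_{U,\tilde U}}\|\le \|W\|\le 1$ since $W$ is a coisometry (being the adjoint of the isometry $W^*$), and a compression of a coisometry is a contraction.

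The identification in the second and third steps is the technical heart, and the main obstacle is bookkeeping: one must track carefully how the Fock-space grading on $\widehat{\m H}_C=\m H_C\oplus(\Gamma\otimes\m D_C)$ interacts with the tensor-leg reshuffling built into $U_n,\tilde U_n$, and verify that the compression that kills the $\m H_A$-part and the $\m H_C$-parts of the intermediate states leaves exactly the series $\sum_\alpha \Theta_{U,\tilde U}^{(\alpha)}z^\alpha$. This is morally the same computation as in sections 4--5 of \cite{DH11}, which is why the author suppresses it; the substantive input is entirely Proposition \ref{2.2} and the coisometry property of $\widehat W$ from the theorem of section 2, after which contractivity is immediate. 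An alternative, slightly softer route that avoids pinning down every matrix entry is to note directly that $M_{\Theta_{U,\tilde U}}$ intertwines $\uu L\otimes I_{\m D_E}$ with $\uu L\otimes I_{\m D_C}$ (it is multi-analytic), realize it as $\bigl(W|_{\Gamma\otimes\m D_E \text{ part}}\bigr)$ after identifying $\ell^2(\tilde\Lambda,\m D_E)$ with $\Gamma\otimes\m D_E$ via the wandering subspace $\m E=e_\emptyset\otimes\m D_E$, and then invoke that any operator which is a restriction/compression of the coisometry $W$ between these shift spaces has norm at most one; this is the version I would write up.
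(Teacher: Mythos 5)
Your proposal is correct and takes essentially the route the paper intends: the paper omits the proof by deferring to sections 4--5 of \cite{DH11}, and the substance of that argument is exactly your identification of $M_{\Theta_{U,\tilde U}}$ with the restriction of the coisometry $W$ under the Fourier-type unitaries $\Psi_C,\Psi_E$ (this is precisely the paper's Theorem \ref{b}, $\Psi_W|_{\ell^2(\tilde \Lambda,\m D_E)}=M_{\Theta_{U,\tilde U}}$ with $\Psi_W=\Psi_C^{-1}W\Psi_E$), after which contractivity is immediate because $W^*$ is an isometry. The only work you leave implicit is the coefficient bookkeeping matching the power series with the matrix entries of $W$, which is the same computation carried out in \cite{DH11}, so there is no genuine gap.
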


$M_{\Theta_{U, \tilde U}}$ intertwines with right translation i.e.,
\[
 M_{\Theta_{U, \tilde U}}(\sum_{\alpha \in \tilde \Lambda} u(\alpha) z^\alpha z^i) =
M_{\Theta_{U, \tilde U}}(\sum_{\alpha \in \tilde \Lambda}u(\alpha) z^\alpha) z^i
\]
for $i=1,\ldots,d$. Thus $M_{\Theta_{U, \tilde U}}$ is a \textit{multi-analytic operator} (cf. \cite{Po95}).
Since $M_{\Theta_{U, \tilde U}}$
is a contractive operator, the transfer function $\Theta_{U, \tilde U} \in  \m S_{nc,d}(\m D_E, \m D_C) $
  (non-commutative $d$-variable Schur class, cf. section 2.4 of \cite{BV05}) where
\[
 \m S_{nc,d}(\m D_E, \m D_C) := \{ T(z) =  \displaystyle \sum_{\alpha \in \tilde \Lambda} T_\alpha z^{\alpha}:
M_T : \ell^2( \tilde \Lambda, \m D_E) \to \ell^2( \tilde \Lambda, \m D_C) ~\mbox{satisfies}~ \| M_T \| \leq 1\}.
\]

Next we show that the transfer function 
coincides with the characteristic function of lifting \cite{DG11}.
Define unitaries $\Psi_C: \ell^2(\tilde \Lambda, \m D_C) \to \Gamma \otimes \m D_C$
 and
\newline $\Psi_E : \m D_E z ^\emptyset \to e_\emptyset\otimes \m D_E $ by
\[
 \Psi_C \big( \sum_{\alpha \in \tilde \Lambda} y_\alpha z^{\alpha}\big) =
\sum_{\alpha \in \tilde \Lambda } e_{\bar\alpha}\otimes y_\alpha
~~\mbox{and}~~
\Psi_E  (\eta z^\emptyset) = e_\emptyset \otimes \eta
\]
respectively where $ y_\alpha \in \m D_C$ and $\eta \in \m D_E$.
We observe that $\tilde C$ vanishes on $\m H_C$ by the following argument. For $\tilde h \in \m H_C$ we have
\begin{eqnarray}
 \tilde C \tilde h = \sum_{j=1}^d (D_C)_j P_{\m H_C} E^*_j \tilde h = \sum_{j=1}^d (D_C)_j P_{\m H_C} C^*_j \tilde h
= \sum_{j=1}^d (D_C)_j  C^*_j \tilde h
 = D_C \uu C^* \tilde h.   \nonumber
\end{eqnarray}
Since $\uu C$ is a coisometric tuple, the operator
 $D_C$ is the orthogonal projection. So we have
\begin{eqnarray}
 \tilde C \tilde h = D_C^2\uu C^* \tilde h
= (I - \uu C^* \uu C)\uu C^* \tilde h
 =(\uu C^* - ~\uu C^* \uu C~ \uu C^*) \tilde h
 =(\uu C^* - \uu C^*)\tilde h
 =  0. \label{3.3}
\end{eqnarray}
The second last equality follows by $\uu C$ is a coisometric tuple. Further, for $h_a \in \m H_A$
\begin{eqnarray}
\tilde C h_a &= & \sum_{j=1}^d (D_C)_j P_{\m H_C} E^*_j h_a = \sum_{j=1}^d (D_C)_j P_{\m H_C} 
(B^*_j h_a\oplus A^*_j h_a)\nonumber\\
&= &\sum_{j=1}^d (D_C)_j B^*_j h_a =  D_C \uu B^* h_a=  \uu B^* h_a.\label{3.4}
\end{eqnarray}
The last equality holds because  $D_C$ is the orthogonal projection and Range $\uu B^* \subset \m D_C$.
Define $D_{*,A} := (I -\uu A\uu A^* )^ 2 : \m H_A \to\m H_A$ and $\m D_{*,A} : = \overline {\mbox{Range} ~ D_{*,A}}.$
Since $\uu E$ is a coisometric lifting of
$\uu C$,
it follows from Theorem 2.1 of \cite{DG11} that there exist an isometry $\gamma : \m D_{*,A} \to
\m D_C$ with $\gamma D_{*,A} h_a = \uu B ^* h_a$ for  each $h_a \in \m  H_A$. By equation (\ref{3.4}), we have
\begin{eqnarray}\label{3.5}
 \tilde C h_a  =   \gamma D_{*,A} h_a ~\mbox{ for  each}~ h_a \in \m  H_A.
\end{eqnarray}

We recall the following expansion of the symbol of the characteristic function $M_{C, E}: \Gamma \otimes \m D_E
\to \Gamma \otimes \m D_C$
 of lifting $\uu E$ of $\uu C$ from \cite{DG11}: For
$h  \in \m H_C$
\begin{eqnarray}\label{4.1}
 \Theta_{C, E} (D_E)_ih = e_\emptyset \otimes [(D_C)_ih - \gamma D_{*, A}B_i h] - \displaystyle
\sum_{ |\alpha | \geq 1} e_\alpha \otimes \gamma D_{*, A} (A_{\alpha})^* B_ih,
\end{eqnarray}
and for $h \in \m H_A$
 \begin{eqnarray}\label{4.2}
 \Theta_{C, E} (D_E)_ih  =  - e_\emptyset \otimes  \gamma D_{*, A}A_i h + \sum_{ j = 1,\ldots ,d}
e_j \otimes \displaystyle \sum_{
 \alpha \in \tilde \Lambda} e_\alpha  \otimes \gamma D_{*, A}
(A_{\alpha})^*(\delta_{ij}I- A_j^*A_i) h
 \end{eqnarray}
where $i =1, \ldots, d$.

\begin{theorem}
  The transfer function $\Theta_{U, \tilde U}$ and the characteristic function $ \Theta_{C, E}$
 are related by the formula
\[
\Psi_C \Theta_{U,\tilde U} (z) =\Theta_{C, E} \Psi_E.
\]
In other words,  the transfer function $\Theta_{U, \tilde U}$ coincides with the 
characteristic function $ \Theta_{C, E}$.

\end{theorem}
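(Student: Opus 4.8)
The plan is to establish the displayed formula $\Psi_C\Theta_{U,\tilde U}(z)=\Theta_{C,E}\Psi_E$ as an identity of maps $\m D_E z^\emptyset\to\Gamma\otimes\m D_C$, i.e.\ to show for every $\eta\in\m D_E$ that
\[
\Psi_C\big(\Theta_{U,\tilde U}(z)\,\eta z^\emptyset\big)=\Theta_{C,E}(e_\emptyset\otimes\eta),
\]
where the left side means: substitute $\hat u(z)=\eta z^\emptyset$, obtain $\sum_\alpha\Theta_{U,\tilde U}^{(\alpha)}\eta\,z^\alpha\in\ell^2(\tilde\Lambda,\m D_C)$ (using Theorem~\ref{3.7}), then apply $\Psi_C$. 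The ``in other words'' claim then follows because $M_{\Theta_{U,\tilde U}}$ and $\Theta_{C,E}$ are multi-analytic and hence determined by their action on the constants. Since $\m D_E=\overline{\mathrm{span}}\{(D_E)_ih:1\le i\le d,\ h\in\m H_E\}$, $\m H_E=\m H_C\oplus\m H_A$, and both sides depend boundedly on $\eta$, it suffices to take $\eta=(D_E)_ih$ with $h\in\m H_C$ (comparing with (\ref{4.1})) and with $h\in\m H_A$ (comparing with (\ref{4.2})).

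To unwind the left side: the $z^\emptyset$-coefficient of $\Theta_{U,\tilde U}(z)$ applied to $\eta$ is $\tilde D\eta$, and for a nonempty word written as $\beta j$ (with $\beta\in\tilde\Lambda$, $1\le j\le d$) the $z^{\beta j}$-coefficient is $\tilde C(E_{\bar\beta})^*(D_E)_j^*\eta$, while $\Psi_C$ sends $z^{\beta j}$ to $e_{\overline{\beta j}}$. Using $(D_E)_j^*(D_E)_i=\delta_{ij}I-E_j^*E_i$ and the block form (\ref{a}) of $E_j$, one reduces these to the $C_j,B_j,A_j$. For the $z^\emptyset$-term one also uses that $D_C$ is the orthogonal projection onto $\m D_C$ (since $\uu C$ is coisometric), $D_C\uu C^*=0$ (as in the proof of (\ref{3.3})), $\sum_j(D_C)_jB_j^*=D_C\uu B^*$, and $\uu B^*h_a=\gamma D_{*,A}h_a$ with range in $\m D_C$ (\ref{3.4})--(\ref{3.5}); a short computation gives $\tilde D(D_E)_ih=(D_C)_ih-\gamma D_{*,A}B_ih$ when $h\in\m H_C$ and $\tilde D(D_E)_ih=-\gamma D_{*,A}A_ih$ when $h\in\m H_A$, matching the $e_\emptyset$-terms of (\ref{4.1}) and (\ref{4.2}).

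The decisive step is the higher-order coefficients, where the two facts doing the work are: $\tilde C$ vanishes on $\m H_C$ (\ref{3.3}) and equals $\gamma D_{*,A}$ on $\m H_A$ (\ref{3.5}); and the lower-triangular block shape of the $E_j$'s forces the $\m H_A$-component of $(E_\delta)^*h_a$ to be exactly $(A_\delta)^*h_a$ for any word $\delta$ and $h_a\in\m H_A$, so that $\tilde C(E_\delta)^*h_a=\gamma D_{*,A}(A_\delta)^*h_a$. Substituting $E_ih=C_ih\oplus B_ih$ (for $h\in\m H_C$, with $\tilde C$ annihilating the $\m H_C$-part $(E_\delta)^*(C_ih)$) and $E_ih=A_ih$ (for $h\in\m H_A$), and using the multi-index relations $(E_{\bar\beta})^*E_j^*=(E_{\overline{\beta j}})^*$ and $(A_{\overline{\beta j}})^*=(A_{\bar\beta})^*A_j^*$, one finds that for every word $\delta$ with $|\delta|\ge1$ the $\Psi_C$-image has $e_\delta$-coefficient equal to $-\gamma D_{*,A}(A_\delta)^*B_ih$ when $h\in\m H_C$, and, writing $\delta=j\alpha$, equal to $\gamma D_{*,A}(A_\alpha)^*(\delta_{ij}I-A_j^*A_i)h$ when $h\in\m H_A$. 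These coincide term by term with the coefficients of $e_\delta$ in (\ref{4.1}) and of $e_j\otimes e_\alpha$ in (\ref{4.2}), which finishes the proof.

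The step I expect to be most delicate is the index bookkeeping in the last paragraph: tracking $z^{\beta j}\mapsto e_{\overline{\beta j}}$ under $\Psi_C$, keeping the reversal conventions in $\bar\beta$ and in $z^\alpha=z_{\alpha_n}\cdots z_{\alpha_1}$ consistent, and re-indexing an arbitrary nonempty word both as $\beta j$ (the form occurring in $\Theta_{U,\tilde U}$) and as $j\alpha$ (the form occurring in (\ref{4.2})) so the comparison lines up on the nose; one must also keep careful track of which summand of $\m H_C\oplus\m H_A$ the intermediate vectors $(E_\delta)^*(\cdot)$ lie in when applying $\tilde C$.
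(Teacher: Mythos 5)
Your proposal is correct and follows essentially the same route as the paper: verify the identity on vectors $(D_E)_ih$ with $h\in\m H_C$ and $h\in\m H_A$ separately, compute $\tilde D(D_E)_ih$ and the higher coefficients via $(D_E)_j^*(D_E)_i=\delta_{ij}I-E_j^*E_i$ and the block form of $E_j$, use that $\tilde C$ vanishes on $\m H_C$ and equals $\gamma D_{*,A}$ on $\m H_A$, and match against the expansions (\ref{4.1}) and (\ref{4.2}). The only cosmetic difference is that you make explicit the density of $\{(D_E)_ih\}$ in $\m D_E$ and the word-reversal bookkeeping, which the paper handles implicitly.
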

\begin{proof}
 Let $h \in \m H_E$. For $i = 1, \ldots, d$
\begin{eqnarray}\label{3.8}
\Psi_C \Theta_{U, \tilde U}( (D_E)_i  h z^\emptyset)
\nonumber & = & \Psi_C  [\tilde D ~z^\emptyset+   \displaystyle
\sum_{\beta \in \tilde \Lambda , j = 1,\ldots ,d} \tilde C(E_{\bar\beta})^*
 (D_E)_j^* z^{\beta j}] ((D_E)_i  hz^\emptyset) \\
  & = & \Psi_C [\tilde D (D_E)_i  h~z^\emptyset+ \displaystyle
\sum_{\beta \in \tilde \Lambda , j = 1,\ldots ,d} \tilde C  (E_{\bar\beta})^* (D_E)_j^* (D_E)_i  h~z^{\beta j}].
\end{eqnarray}

Case 1. For $h \in \m H_C$ and $i=1,\ldots,d$
\begin{eqnarray}
 \tilde D  (D_E)_i  h
 &=& \displaystyle \sum_{j = 1}^d (D_C)_jP_{\m H_C}(D_E)_j^* (D_E)_i h
 = \displaystyle \sum_{j = 1}^d (D_C)_j P_{\m H_C}( \delta_{ij}I-E_j^*E_i) h \nonumber \\
& =& (D_C)_i h - \big (\displaystyle \sum_{j = 1}^d (D_C)_j P_{\m H_C} E_j^*\big)E_i h
 =  (D_C)_i h - \tilde C E_i h \nonumber\\
& = & (D_C)_i h - \tilde C (C_i h\oplus B_i h)
   =  (D_C)_i h-\tilde C B_i h. \label{3.9}
\end{eqnarray}
The last equality follows by equation (\ref{3.3}). The second part of equation (\ref{3.8}) simplifies to
\begin{eqnarray*}
 \displaystyle
\sum_{\beta \in \tilde \Lambda , j = 1,\ldots ,d} \tilde C  (E_{\bar\beta})^* (D_E)_j^* (D_E)_i  h~z^{\beta j} \nonumber
& = &
\displaystyle   \sum_{\beta \in \tilde \Lambda, j = 1,\ldots ,d}
\tilde C (E_{\bar\beta})^*( \delta_{ij}I-E_j^*E_i) h ~z^{\beta j} \\
\nonumber&=& \displaystyle \sum_{\beta \in \tilde \Lambda}
\tilde C (E_{\bar\beta})^* h~z^{\beta i}  - \displaystyle   \sum_{\beta \in \tilde \Lambda ,j = 1,\ldots ,d}
\tilde C (E_{\bar\beta})^* E_j^*E_i h ~z^{\beta j}.
\end{eqnarray*}
By equation (\ref{3.3}) it follows that
\begin{eqnarray}
 \displaystyle
\sum_{\beta \in \tilde \Lambda , j = 1,\ldots ,d} \tilde C  (E_{\bar\beta})^* (D_E)_j^* (D_E)_i  h~z^{\beta j}
\nonumber&=& - \displaystyle   \sum_{\beta \in \tilde \Lambda, j = 1,\ldots ,d}
\tilde C (E_{\bar\beta})^* E_j^*E_i h ~z^{\beta j}\\
\nonumber&=& - \displaystyle  \sum_{\beta \in \tilde \Lambda, j = 1,\ldots ,d}
\tilde C (E_{\bar\beta})^* \big((C_j^*C_i +B_j^*B_i)h \oplus A_j^*B_ih\big) ~z^{\beta j}\\
\nonumber &=&  - \displaystyle    \sum_{\beta \in \tilde \Lambda ,j = 1,\ldots ,d}
\tilde C (A_{\bar\beta})^*A_j^*B_ih ~z^{\beta j}\\
&=&- \displaystyle   \sum_{ |\alpha | \geq 1}
 \tilde C (A_{\bar\alpha})^* B_ih ~z^{\alpha}.\label{3.10}
\end{eqnarray}
The  equality which is second from below in the above equation array follows by equation (\ref{3.3}).
By equations (\ref{3.8}), (\ref{3.9}), and (\ref{3.10}), we have for $i= 1, \ldots,d$ and $h \in \m H_C$
\begin{eqnarray}
\Psi_C \Theta_{U, \tilde U} ((D_E)_i h z^\emptyset)
\nonumber &=& \Psi_C[(D_C)_i h-\tilde C B_i h)~z^\emptyset -
\displaystyle   \sum_{ |\alpha | \geq 1} \tilde C (A_{\bar\alpha})^* B_ih ~z^{\alpha}]\\
\nonumber &=& e_\emptyset \otimes  ((D_C)_i h-\tilde C B_i h) - \displaystyle
\sum_{|\alpha | \geq 1}
e_{\bar\alpha} \otimes \tilde C (A_{\bar\alpha})^* B_ih \\
\nonumber &=& e_\emptyset \otimes [(D_C)_ih - \gamma D_{*, A}B_i h] - \displaystyle   \sum_{
 |\alpha | \geq 1} e_{\bar\alpha} \otimes \gamma D_{*, A} (A_{\bar\alpha})^* B_ih.
\end{eqnarray}
By equation (\ref{4.1}) we obtain
 \begin{eqnarray}
\Psi_C \Theta_{U, \tilde U} ((D_E)_i h z^\emptyset)
 \nonumber &=&  \Theta_{C, E} (e_\emptyset \otimes (D_E)_i h)\\
&=& \Theta_{C, E} \Psi_E ((D_E)_i h z^\emptyset). \label{3.11}
 \end{eqnarray}
Case 2. For $h \in \m H_A$ and $i= 1, \ldots,d$
\begin{eqnarray}
 \tilde D  (D_E)_i  h
 \nonumber &=& \displaystyle \sum_{j = 1}^d (D_C)_j P_{\m H_C}(D_E)^*_j (D_E)_i h
= \displaystyle \sum_{j = 1}^d (D_C)_j P_{\m H_C}( \delta_{ij}I-E_j^*E_i) h\\
 &=& (D_C)_i P_{\tilde{\m H}}  h - \big (\displaystyle \sum_{j = 1}^d D_j P_{\m H_C} E_j^*\big)E_i h
=  - \tilde C A_i h.     \label{3.12}
  \end{eqnarray}
Consider again the second part of equation (\ref{3.8}).
\begin{eqnarray}\label{3.13}
 \displaystyle \sum_{\beta \in \tilde \Lambda ,j = 1,\ldots ,d}\tilde C (E_{\bar\beta})^*
(D_E)_j^* (D_E)_i  h~z^{\beta j}
  \nonumber& = &
\displaystyle   \sum_{\beta \in \tilde \Lambda,j = 1,\ldots ,d}
\tilde C (E_{\bar\beta})^*( \delta_{ij}I-E_j^*E_i) h ~z^{\beta j} \\
&=&  \displaystyle   \sum_{\beta \in \tilde \Lambda,
 j = 1,\ldots ,d}\tilde C (A_{\bar\beta})^*(\delta_{ij}I- A_j^*A_i) h~z^{\beta j}.   \label{3.13}
\end{eqnarray}
The last equality follows from equation (\ref{3.3}).
By equations (\ref{3.8}), (\ref{3.12}), and (\ref{3.13}), we have for $i= 1, \ldots, d$ and  $h\in \m H_A$
\begin{eqnarray}
 \Psi_C \Theta_{U, \tilde U} ((D_E)_i h z^\emptyset)
\nonumber &=& \Psi_C [ - \tilde C A_i h ~z^\emptyset +  \displaystyle   \sum_{\beta \in \tilde
\Lambda, j = 1,\ldots ,d}\tilde C (A_{\bar\beta})^*(\delta_{ij}I- A_j^*A_i) h~z^{\beta j}]\\
\nonumber &=& - e_\emptyset \otimes \tilde C A_i h +
\displaystyle   \sum_{\beta \in \tilde \Lambda , j = 1,\ldots ,d} e_j \otimes
e_{\bar\beta}  \otimes  \tilde C (A_{\bar\beta})^*(\delta_{ij}I- A_j^*A_i) h)\\
\nonumber &=& - e_\emptyset \otimes  \gamma D_{*, A}A_i h + \displaystyle
\sum_{\beta \in \tilde \Lambda, j = 1,\ldots ,d}
e_j \otimes e_{\bar\beta}   \otimes \gamma D_{*, A} (A_{\bar\beta})^*(\delta_{ij}I- A_j^*A_i) h.
\end{eqnarray}
Equation (\ref{4.2}) yields
 \begin{eqnarray}
 \Psi_C \Theta_{U, \tilde U} ((D_E)_i h z^\emptyset) &=&  \Theta_{C, E} (e_\emptyset \otimes (D_E)_ih
\nonumber\\
&=& \Theta_{C, E} \Psi_E( (D_E)_i h z^\emptyset). \label{3.15}
\end{eqnarray}
We infer from equations (\ref{3.11}) and (\ref{3.15}) that
\[
 \Psi_C \Theta_{U, \tilde U}= \Theta_{C,E} \Psi_E.
\]\end{proof}

We extend  the unitary $\Psi_E$ to a unitary (also denoted by $\Psi_E$) 
from $ \m H_A \oplus \ell^2(\tilde \Lambda, \m D_E)$
on to $\m H_A \oplus (\Gamma \otimes \m D_E)$ by
\[
\Psi_E(h_a \oplus \sum_{\alpha \in \tilde \Lambda} \eta_\alpha z^{\alpha})=
(h_a\oplus \sum_{\alpha \in \tilde \Lambda } e_{\bar\alpha} \otimes \eta_\alpha) 
\]
where $h_a \in \m H_A$ and $\eta_\alpha \in \m D_E$.
Using the unitaries  $\Psi_C,\Psi_E$, and the coisometry $W$ of section
 2 we define $\Psi_W$ by the following commutative
diagram:

\begin{equation}
\xymatrix{
\m H_A \oplus (\Gamma \otimes \m D_E) \ar[r]^{{ W}} \ar[d]_{\Psi_E^{-1}}
&   \Gamma \otimes \m D_C \ar[d]^{\Psi_C^{-1}}
\\
\m H_A \oplus \ell^2 (\tilde \Lambda, \m D_E) \ar[r]^{\Psi_W}
&   \ell^2 (\tilde \Lambda, \m D_C),
}
\end{equation}
i.e., $\Psi_W  = \Psi_C^{-1} W \Psi_E$. Similar to Theorem 5.1 of \cite{DH11} we have

\begin{theorem}\label{b}
  The operator $\Psi_W$ satisfies the relation
\[
 \Psi_W|_{\ell^2 (\tilde \Lambda, \m D_E)} = M_{\Theta_{U, \tilde U}} .
\]
\end{theorem}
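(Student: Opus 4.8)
The plan is to unwind the definition $\Psi_W = \Psi_C^{-1} W \Psi_E$ and evaluate both sides on a typical element $\sum_{\alpha \in \tilde\Lambda} \eta_\alpha z^\alpha$ of $\ell^2(\tilde\Lambda, \m D_E)$, showing that $\Psi_W$ restricted to this subspace agrees with $M_{\Theta_{U,\tilde U}}$. Since both operators are bounded (the latter a contraction by Theorem \ref{3.7}) and the finitely supported series are dense, it suffices to check the identity on a single monomial $\eta z^\emptyset$ with $\eta \in \m D_E$, and then use the intertwining relations $W V_j^E = V_j^C W$ from section 2 together with the fact that $M_{\Theta_{U,\tilde U}}$ intertwines right translation to propagate from the vacuum level to all of $\ell^2(\tilde\Lambda,\m D_E)$.

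First I would note that under the embedding $\m D_E \hookrightarrow \m H_E \oplus \m D_E$, the vector $\eta$ is the $\m D_E$-component, so $W\Psi_E(\eta z^\emptyset) = W(e_\emptyset \otimes \eta) = \widehat W(e_\emptyset \otimes \eta) \in \Gamma \otimes \m D_C$, using that $W = \widehat W|_{\m H_A \oplus (\Gamma \otimes \m D_E)}$ and $e_\emptyset \otimes \m D_E \subset \Gamma \otimes \m D_E$. The key computational step is then to identify $\widehat W(e_\emptyset \otimes \eta)$, equivalently $\langle \widehat W(e_\emptyset \otimes \eta), e_{\bar\alpha} \otimes y \rangle$ for $y \in \m D_C$, $\alpha \in \tilde\Lambda$. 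By the adjoint formula $\widehat W^* = \mathrm{sot\text{-}}\lim U_1^* \cdots U_n^* \tilde U_n \cdots \tilde U_1 Q_n$ from Theorem 2.1, this inner product equals $\langle e_\emptyset \otimes \eta, \widehat W^*(e_{\bar\alpha} \otimes y)\rangle$, and expanding $\widehat W^*(e_{\bar\alpha}\otimes y)$ through the compositions $U_1^* \cdots U_n^* \tilde U_n \cdots \tilde U_1$ produces precisely the coefficients appearing in $\Theta_{U,\tilde U}$, namely terms built from $\tilde C (E_{\bar\beta})^* (D_E)_j^*$ (and the $\tilde D$ term at the vacuum). In fact the cleaner route is probably to observe that the previous theorem already established $\Psi_C \Theta_{U,\tilde U}(z) = \Theta_{C,E}\Psi_E$ on the vacuum-level vectors $(D_E)_i h z^\emptyset$, and combine this with the fact (implicit in section 2 and \cite{DG11}) that $\widehat W$ restricted appropriately realizes the characteristic function $\Theta_{C,E}$; so $W\Psi_E = \Psi_C M_{\Theta_{U,\tilde U}}$ on the vacuum summand, which after rearranging is exactly $\Psi_W|_{\m D_E z^\emptyset} = M_{\Theta_{U,\tilde U}}|_{\m D_E z^\emptyset}$.

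Next I would extend from the vacuum level to all of $\ell^2(\tilde\Lambda,\m D_E)$ using covariance. The operator $\Psi_W = \Psi_C^{-1} W \Psi_E$ intertwines the shifts: $\Psi_E$ carries right multiplication by $z_j$ (up to the reversal convention) to $V_j^E = L_j \otimes I_{\m D_E}$ on $\Gamma \otimes \m D_E$, $W$ intertwines $V_j^E$ with $V_j^C = L_j \otimes I_{\m D_C}$ by Proposition \ref{2.2} restricted, and $\Psi_C^{-1}$ carries $V_j^C$ back to right multiplication by $z_j$ on $\ell^2(\tilde\Lambda,\m D_C)$. Meanwhile $M_{\Theta_{U,\tilde U}}$ is multi-analytic, i.e. it also intertwines right translation. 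Hence $\Psi_W$ and $M_{\Theta_{U,\tilde U}}$ are two operators $\ell^2(\tilde\Lambda,\m D_E) \to \ell^2(\tilde\Lambda,\m D_C)$ that agree on the generating subspace $\m D_E z^\emptyset$ and intertwine the same right shifts; since $\ell^2(\tilde\Lambda,\m D_E) = \bigvee_\alpha (\m D_E z^\emptyset) z^\alpha$, they agree everywhere.

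The main obstacle I anticipate is the bookkeeping in the vacuum-level computation: carefully tracking how the telescoping product $U_1^* \cdots U_n^* \tilde U_n \cdots \tilde U_1$ acts on $e_{\bar\alpha}\otimes y$ and matching the resulting coefficients against the formal power series $\Theta_{U,\tilde U}(z) = \tilde D + \tilde C \sum_{\beta, j} (E_{\bar\beta})^* (D_E)_j^* z^{\beta j}$, keeping the reversal conventions $z^\alpha = z_{\alpha_n}\cdots z_{\alpha_1}$ and $e_{\bar\alpha}$ straight. Once the vacuum case is pinned down — or, more efficiently, once one cites the already-proven identity $\Psi_C\Theta_{U,\tilde U}(z) = \Theta_{C,E}\Psi_E$ and the fact that $W$ implements $\Theta_{C,E}$ in the sense of \cite{DG11} — the intertwining argument is routine, exactly as in Theorem 5.1 of \cite{DH11}, which is why the statement here is given without a detailed proof.
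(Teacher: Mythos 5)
Your overall strategy — verify the identity on the vacuum subspace $\m D_E z^\emptyset$ and then propagate to all of $\ell^2(\tilde \Lambda,\m D_E)$ using that $\Psi_E$, $W$, $\Psi_C^{-1}$ intertwine right multiplication by $z_j$ with $V_j^E$, $V_j^C$ and back, while $M_{\Theta_{U,\tilde U}}$ is multi-analytic — is exactly the argument the paper has in mind when it refers to Theorem 5.1 of \cite{DH11}, and the propagation step is carried out correctly (the reversal conventions do make right translation correspond to $L_j\otimes I$). One caution, though: of your two suggested ways to handle the vacuum level, only the first is safe in the logical order of this paper. The ``cleaner route'' assumes $W|_{e_\emptyset\otimes\m D_E}=\Theta_{C,E}$, but in the paper that identity is recorded only \emph{after} Theorem \ref{b}, as a consequence of it together with $\Psi_C\Theta_{U,\tilde U}=\Theta_{C,E}\Psi_E$; to use it as an input you would first have to prove that $\widehat W^*$ coincides with the canonical intertwining isometry underlying the definition of the characteristic function in \cite{DG11} (a uniqueness-of-minimal-dilation argument you do not supply), so as written that route is circular. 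Your first route does work and is the one to carry out: since the forward approximants $\tilde U_1^*\cdots\tilde U_n^*P_nU_n\cdots U_1$ do not stabilize on $e_\emptyset\otimes\eta$, one computes $\langle \widehat W(e_\emptyset\otimes\eta),e_{\bar\alpha}\otimes y\rangle=\langle e_\emptyset\otimes\eta,\widehat W^*(e_{\bar\alpha}\otimes y)\rangle$ and uses the stabilization of $\widehat W^*$ on finitely supported vectors (take $n=|\alpha|+1$); the resulting coefficients are precisely $\tilde D$ at the vacuum and $\tilde C(E_{\bar\beta})^*(D_E)_j^*$ at $z^{\beta j}$, with the projection $P_{\m H_C}$ in $\tilde C$, $\tilde D$ appearing automatically when adjoints of $(D_C)_j$ are taken. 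With that computation written out, your proof is complete and agrees with the intended one.
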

\noindent Observe that we also obtain 
\[
 W|_{e_\emptyset \otimes \m D_E} = \Theta_{C, E}.
\]


\noindent {\bf Acknowledgement:} The author is thankful to Santanu Dey 
 for many helpful discussions.

\end{document}